\patchcmd{\@thm}{\let\thm@indent\indent}{\let\thm@indent\noindent}{}{}
\patchcmd{\@thm}{\thm@headfont{\scshape}}{\thm@headfont{\bfseries}}{}{}
\let\G\undefined
\definecolor{dkgreen}{rgb}{0,0.6,0}
\definecolor{gray}{rgb}{0.5,0.5,0.5}
\definecolor{mauve}{rgb}{0.58,0,0.82}
\tiny\color{gray},
\def\@tocline#1#2#3#4#5#6#7{\relax
  \ifnum #1>\c@tocdepth 
  \else
    \par \addpenalty\@secpenalty\addvspace{#2}%
    \begingroup \hyphenpenalty\@M
    \@ifempty{#4}{%
      \@tempdima\csname r@tocindent\number#1\endcsname\relax
    }{%
      \@tempdima#4\relax
    }%
    \parindent\z@ \leftskip#3\relax \advance\leftskip\@tempdima\relax
    \rightskip\@pnumwidth plus4em \parfillskip-\@pnumwidth
    #5\leavevmode\hskip-\@tempdima
      \ifcase #1
       \or\or \hskip 1em \or \hskip 2em \else \hskip 3em \fi%
      #6\nobreak\relax
    \hfill\hbox to\@pnumwidth{\@tocpagenum{#7}}\par
    \nobreak
    \endgroup
  \fi}
\crefname{section}{\S}{\S\S}
\crefname{subsection}{\S}{\S\S}
\crefname{axioms}{Axiom}{Axioms}
\crefname{exercise}{Exercise}{Exercises}
\crefname{exercisenum}{Exercise}{Exercises}
\crefname{construction}{Construction}{Constructions}
\crefname{problem}{Problem}{Problems}
\crefname{theorem}{Theorem}{Theorems}
\crefname{definition}{Definition}{Definitions}
\crefname{prop}{Proposition}{Propositions}
\crefname{lemma}{Lemma}{Lemmas}
\crefname{example}{Example}{Examples}
\crefname{examplealph}{Example}{Examples}
\crefname{corollary}{Corollary}{Corollaries}
\crefname{nonexample}{Nonexample}{Nonexamples}
\crefname{equation}{}{}
\crefname{summary}{Summary}{Summaries}
\crefname{recollection}{Recollection}{Recollections}
\Crefname{recollection}{Recollection}{Recollections}
\Crefname{nonexample}{Nonexample}{Nonexamples}
\Crefname{corollary}{Corollary}{Corollaries}
\Crefname{corollary}{Corollary}{Corollaries}
\Crefname{axioms}{Axiom}{Axioms}
\Crefname{exercise}{Exercise}{Exercises}
\Crefname{exercisenum}{Exercise}{Exercises}
\Crefname{construction}{Construction}{Constructions}
\Crefname{problem}{Problem}{Problems}
\Crefname{theorem}{Theorem}{Theorems}
\Crefname{definition}{Definition}{Definitions}
\Crefname{prop}{Proposition}{Propositions}
\Crefname{lemma}{Lemma}{Lemmas}
\Crefname{example}{Example}{Examples}
\Crefname{examplealph}{Example}{Examples}
\Crefname{section}{\S}{\S\S}
\Crefname{subsection}{\S}{\S\S}
\DeclareMathOperator{\QCoh}{\mathrm{QCoh}}
\DeclareMathOperator{\Sym}{Sym}
\newtheorem*{conj-moore}{Conjecture~\ref{moore-splitting}}
\theoremstyle{definition}
\newcommand{\LT}{\mathrm{LT}}
\newcommand{\D}{\mathrm{D}}
\newcommand{\SL}{\mathrm{SL}}
\newcommand{\GL}{\mathrm{GL}}
\newcommand{\F}{\mathrm{F}}
\newcommand{\fr}[1]{\mathfrak{#1}}
\newcommand{\g}{\mathfrak{g}}
\newcommand{\G}{\mathrm{G}}
\newcommand{\IndCoh}{\mathrm{IndCoh}}
\newcommand{\reg}{\mathrm{reg}}
\renewcommand{\S}{Section }
\providecommand{\leftsquigarrow}{%
  \mathrel{\mathpalette\reflect@squig\relax}%
}
\newcommand{\reflect@squig}[2]{%
  \reflectbox{$\m@th#1\rightsquigarrow$}%
}
\renewcommand{\tilde}{\widetilde}
\DeclareSymbolFontAlphabet{\mathbb}{AMSb} 
\DeclareSymbolFontAlphabet{\mathbbl}{bbold}
    \renewcommand{\SL}{\operatorname{SL}}
    \newcommand{\LG}{\mathfrak{g}}
    \newcommand{\LH}{\mathfrak{h}}
    \renewcommand{\LT}{\mathfrak{t}}
    \newcommand{\LN}{\mathfrak{n}}
    \newcommand{\LB}{\mathfrak{b}}
    \renewcommand{\\}{\backslash}
    \theoremstyle{definition}
    \newtheorem{Theorem}{Theorem}[section]
    \newtheorem{Corollary}[Theorem]{Corollary}
    \newtheorem{Definition}[Theorem]{Definition}
    \newtheorem{Proposition}[Theorem]{Proposition}
    \newtheorem{Assumption}[Theorem]{Assumption}
    \newtheorem{Remark}[Theorem]{Remark}
    \newtheorem{Example}[Theorem]{Example}
    \newtheorem{Lemma}[Theorem]{Lemma}
\title{The Cotangent Bundle of $G/U_P$ and Kostant-Whittaker Descent}
\author{Tom Gannon}
\thanks{University of California, Los Angeles - gannonth@math.ucla.edu}
    \renewcommand{\S}{\mathcal{S}} 
    \newcommand{\AvN}{\text{Av}_*^N}
    \newcommand{\Avpsi}{\text{Av}_{!}^{\psi}}
    \newcommand{\Symt}{\text{Sym(}\mathfrak{t}\text{)}}
    \newcommand{\LTd}{\LT^{\ast}}
\begin{document}
\renewcommand{\G}{\mathcal{G}}
\newcommand{\AvNTw}{\text{Av}_*^{N, (T, w)}}
\newcommand{\AvGw}{\text{Av}_{\ast}^{G,w}}
\newcommand{\pifin}{\pi_{\text{fin}}}
\newcommand{\pifinL}{\pi_{\text{fin,L}}}

    \newcommand{\ELeftAdjoint}{\text{ev}_{\omega_{\LTd}}}
\newcommand{\ClGlobalDiffOp}{\text{H}^0\Gamma(\mathcal{D}_{G/N})}
\newcommand{\GlobalDiffOp}{\Gamma(\mathcal{D}_{G/N})}
\newcommand{\indsch}{\mathcal{X}}

    \newcommand{\DGCatContk}{\text{DGCat}^k_{\text{cont}}}
\newcommand{\DGCatContL}{\text{DGCat}^L_{\text{cont}}}

\newcommand{\DNTw}{\mathcal{D}(N\backslash G/N)^{T_r,w}}
\newcommand{\DNTwWhit}{\mathcal{D}(N^-_{\psi}\backslash G/N)^{T_r,w}}
\newcommand{\DNWhit}{\mathcal{D}(N^-_{\psi}\backslash G/N)}
\newcommand{\DNTwldeg}{\mathcal{D}(N \backslash G/N)^{T_r,w}_{\text{left-deg}}}
\newcommand{\DNTwnondeg}{\mathcal{D}(N \backslash G/N)^{T_r,w}_{\text{nondeg}}}
\newcommand{\DN}{\mathcal{D}(N\backslash G/N)}
\newcommand{\DNldeg}{\mathcal{D}(N \backslash G/N)_{\text{left-deg}}}
\newcommand{\DNnondeg}{\mathcal{D}(N \backslash G/N)_{\text{nondeg}}}
\newcommand{\DNlambda}{\mathcal{D}^{\lambda}(N \backslash G/B)}
\newcommand{\Dpsilambda}{\mathcal{D}^{\lambda}(N^- _{\psi}\backslash G/B)}
\newcommand{\DbiTw}{\mathcal{D}(N \backslash G/N)^{T \times T, w}}
\newcommand{\DbiTwnondeg}{\mathcal{D}(N \backslash G/N)^{T \times T, w}_{\text{nondeg}}}
\newcommand{\DbiTwnondegheart}{\mathcal{D}(N \backslash G/N)^{(T \times T, w), \heartsuit}_{\text{nondeg}}}
\newcommand{\DbiTwdeg}{\mathcal{D}(N \backslash G/N)^{T \times T, w}_{\text{deg}}}
\newcommand{\DNBlambda}{\mathcal{D}(N \backslash G/_{\lambda}B)}
\newcommand{\DNTwBlambda}{\mathcal{D}(N \backslash G/_{\lambda}B)}
\newcommand{\DWhitBlambda}{\mathcal{D}(N^-_{\psi}\backslash G/_{\lambda}B)}
\newcommand{\HN}{\D(N \backslash G/N)}
\newcommand{\HNTw}{\D(N \backslash G/N)^{T \times T, w}}
\newcommand{\HNTwabbreviated}{\mathcal{H}^{N, (T,w)}}
\renewcommand{\indsch}{\mathcal{X}}
\newcommand{\wdot}{\dot{w}}
\newcommand{\Gaminusalpha}{\mathbb{G}_a^{-\alpha}}
\newcommand{\Aone}{\mathbf{A}}
\newcommand{\Atwo}{\mathcal{L}\text{-mod}(\Aone)}
\newcommand{\algobj}{\mathcal{A}}
\newcommand{\newalgobj}{\mathcal{A}'}
\newcommand{\tilder}{\tilde{r}}
\newcommand{\Ccirc}{\mathring{\C}}
\newcommand{\rootlattice}{\mathbb{Z}\Phi}
\newcommand{\characterlatticeforT}{X^{\bullet}(T)}
\newcommand{\cocharactherlatticeforT}{X_{\bullet}(T)}
\newcommand{\Spec}{\text{Spec}}
\newcommand{\FourierMukai}{\text{FMuk}}
\newcommand{\oneshiftedCartierdual}{c_1}
\newcommand{\quotientmapforcoarsequotient}{\overline{s}}

\newcommand{\tildeV}{\tilde{\mathbb{V}}}
\newcommand{\Vdual}{V^{\vee}}
\newcommand{\generalstacktoGITquotientmap}{\phi}
\newcommand{\SpecofL}{\text{Spec}(L)}
\newcommand{\terminalmapfromC}{\alpha}
\newcommand{\terminalmapfromCmodassociatedstabilizer}{\dot{\alpha}}
\newcommand{\Hpsiliteral}{\mathcal{H}_{\psi}}
\newcommand{\AvNshifted}{\AvN[\text{dim}(N)]}
\newcommand{\hyperplanefixedbys}{V^{\ast}_{s = \text{id}}}
\newcommand{\fieldpossiblydifferentfromgroundfield}{K}
\newcommand{\Avpsishifted}{\Avpsi[-\text{dim}(N)]}
\newcommand{\FI}{F_{I}}
\newcommand{\FD}{F_{\mathcal{D}}}
\newcommand{\LI}{s_*^{\IndCoh}}
\newcommand{\LD}{\Avpsi}
\newcommand{\LIenh}{\pi_*^{\IndCoh, \text{enh}}}
\newcommand{\LDenh}{\text{Av}_!^{\psi, \text{enh}}}

\newcommand{\IndWithoutSignRep}{\text{Ind}(-)^W}
\newcommand{\ResWithoutSignRep}{\text{WRes}}
\newcommand{\IndWithSignRep}{\text{Ind}(- \otimes k_{\text{sign}})^W}
\newcommand{\ResWithSignRep}{\text{WRes}_s}
\newcommand{\parabolicrestrictionLIFTED}{\text{WRes}}
\newcommand{\V}{\mathcal{V}}
\newcommand{\parabolicrestriction}{\text{Res}}

\newcommand{\conjugacyclassofstandardLevis}{\underline{\Theta}}
\newcommand{\moduliOfEvenMonicPolynomialsOfTHISDEGREE}[1]{\mathcal{M}^{#1}}
\newcommand{\moduliOfALLEvenPolynomialsOfTHISDEGREE}[1]{\mathcal{P}^{#1}}
\newcommand{\moduliOfEvenMonicPolynomialsOfdegreeTWOwithMINUSSIGN}{\mathcal{M}_{-}^{2}}

\newcommand{\basedQuasimapsfromProjectiveLinetoAffineClosureofSLTWOwithIsoClassofTHISDEGREE}[1]{\text{Maps}_*^{#1}(\mathbb{P}^1, \overline{\SL_2/N}/T)}
\newcommand{\basedQuasimapsfromProjectiveLinetoAffineClosureofSLTWOwithISOtoTHISDEGREE}[1]{\text{Maps}_*^{#1, \simeq}(\mathbb{P}^1, \overline{\SL_2/N}/T)}

\newcommand{\explicitIsoofBasedQMapsforSLTWOwithPolynomialsforMapsofTHISDEGREE}[1]{\Phi_{#1}}
\newcommand{\basedQuasimapsfromProjectiveLinetoAffineClosureofBORELwithIsoClassofTHISDEGREE}[1]{\text{Maps}_*^{#1}(\mathbb{P}^1, \overline{B/N}/T)}

\newcommand{\basedmapsFromProjectiveLinetoTHISSPACE}[1]{\text{Maps}_*(\mathbb{P}^1, #1)}

\newcommand{\resolutionOfSingularitiesSpaceForBasicAffineSpace}{G\mathop{\times}\limits^{B} E}

\newcommand{\affineClosureofBasicAffineSpace}{\overline{G/N}}
\newcommand{\affineClosureOfCotangentBundleofBasicAffineSpace}{\overline{T^*(G/N)}}
\newcommand{\smoothLocusOfAffineclosureofCotangentBundleOfBasicAffineSpace}{\overline{T^*(G/N)}_{\text{reg}}}
\newcommand{\SpecOfSymofSectionsOfTangentBundleOfBasicAffineSpace}{\text{Spec}(\text{Sym}_A^{\bullet}(\mathcal{T}_{\overline{G/N}}))}
\newcommand{\ringOfFunctionsForBasicAffineSpace}{A}
\newcommand{\ringOfFunctionsForCOTANGENTBUNDLEOfBasicAffineSpace}{R}
\newcommand{\tangentSheafForBasicAffineSpace}{\mathcal{T}_{G/N}}
\newcommand{\symOfDirectSumOfRepsofFundamentalWeights}{\text{Sym}(\oplus_i E(\omega_i))}

\newcommand{\projectionFromAffineClosureofCotangentBundleToAffineClosureofSpace}{\overline{\pi}}
\newcommand{\momentMapFromAFFINECLOSUREofCotangentSpaceWithGROUPG}{\overline{\mu}_G}
\newcommand{\momentMapFromAFFINECLOSUREofCotangentSpacewithTOTALGROUP}{\overline{\mu}}
\newcommand{\groundfield}{k}
\newcommand{\LGd}{\mathfrak{g}^*}
\newcommand{\momentMapFromAFFINECLOSUREofCotangentSpaceWithGROUPT}{\overline{\mu}_T}

\newcommand{\LeviSubgroup}{L}
\newcommand{\borelOfLevi}{B_L}
\newcommand{\unipotentRadicalofLevi}{N_L}
\newcommand{\lieAlgebraofUnipotentRadicalofLevi}{\mathfrak{n}_L}
\newcommand{\universalPartialHyperkahlerImplosionwithParabolic}{G \mathop{\times}\limits^{\unipotentRadicalOfPARABOLICSUBGROUP} (\mathfrak{g}/\lieAlgebraOfUnipotentRadicalOfPARABOLICSUBGROUP)^*}
\newcommand{\rhocheck}{\rho^{\vee}}
\newcommand{\unipotentRadicalOfPARABOLICSUBGROUP}{U_P}
\newcommand{\lieAlgebraOfUnipotentRadicalOfPARABOLICSUBGROUP}{\mathfrak{u}_P}
\newcommand{\affinizationOfGrothendieckSpringerResolution}{\LGd \times_{\LTd\sslash W} \LTd}
\newcommand{\smoothLocusOfAffineclosureBasicAffineSpace}{\overline{G/N}^{\text{sm}}}

\newcommand{\ringOfFunctionsForFIBERPRODUCTofCotangentBundleofGModNOverLieTWithZero}{\overline{\ringOfFunctionsForCOTANGENTBUNDLEOfBasicAffineSpace}}
\newcommand{\affineClosureFIBERPRODUCTOfCotangentBundleofBasicAffineSpaceatZeroOverLT}{\affineClosureOfCotangentBundleofBasicAffineSpace \times_{\LTd} \{0\}}
\newcommand{\WhittakerHamiltonianReductionofG}{T^{\ast}(G/_{\psi}N^-)}
\newcommand{\WhittakerHamiltonianReductionofGBASECHANGEDtoLTd}{T^{\psi}_{\LTd}}
\newcommand{\RingOfFunctionsForWhittakerHamiltonianReductionOfGBASECHANGEDtoLTd}{\mathcal{W}}
\newcommand{\affineGrassmannianForLANGLANDSDUALgroup}{\text{Gr}_{G^{\vee}}}
\newcommand{\regularRepresentationPerverseSheafOnAffineGrassmannianForLANGLANDSDUALgroup}{\mathcal{R}_G}
\newcommand{\equivariantNilHeckeRingforW}{\mathscr{N}}

\newcommand{\LanglandsdualtoG}{G^{\vee}}
\newcommand{\LanglandsDualParabolic}{P^{\vee}}
\newcommand{\LanglandsDualTorus}{T^{\vee}}
\newcommand{\LanglandsDualBorel}{B^{\vee}}
\newcommand{\SymtWITHOPPOSITEGRADING}{\overline{\Symt}}
\newcommand{\LieAlgebraofCommutatorofOPPOSITEBorel}{\overline{\mathfrak{n}}}
\maketitle

\newcommand{\quasiMinusculeCOWEIGHTofG}{\gamma}
\newcommand{\intersectionCohomologySheafForQuasiMinusculeCOWEIGHTofG}{\text{IC}_{\quasiMinusculeCOWEIGHTofG}}
\newcommand{\Symtd}{\text{Sym}(\LTd)}
\newcommand{\SymtdAdjoinHbar}{\text{Sym}(\LTd)[\hbar]}
\newcommand{\equivariantCohomologyRingofPointforTorusTIMESGM}{R}
\newcommand{\inclusionOfAllFixedPointsIntoGModP}{q}
\newcommand{\chernClassOfLineBundleOnGrThetaVee}{c_1(\theta)}
\newcommand{\compactifiedLineBundleInGrGforQuasiMinusculeWeight}{\mathcal{L}_{\theta}}
\newcommand{\functionsToAdjoinToGetEquivariantHomologyOfNgoPoloLineBundle}{\mathcal{F}}
\newcommand{\functionsOnLTTILDEtimesLTDTILDEmodWPOverCenter}{A}

\newcommand{\generalMomentGraph}{\mathscr{G}}
\newcommand{\blowUpofTHISbyTHISandTHISwithStrictTransformOfLASTTHISRemoved}[3]{\mathcal{B}(#1, #2, #3)}
\newcommand{\universalCentralizersforTHISGROUPOverLTd}[1]{\mathfrak{Z}_{#1, \LTd}}
\newcommand{\blowUpOfTHISClosedSubschemeOfTHAT}[2]{\text{Bl}_{#1}(#2)}
\newcommand{\affineBlowUpByThisSchemeInThisSchemeAndThisFunction}[3]{\text{ABl}_{#1}(#2, #3)}
\newcommand{\universalCentralizersforTHISGROUPOverLTdMODTHISBASE}[2]{\mathfrak{Z}_{#1}^{\mathfrak{t}^*\sslash W_{#2}}}
\newcommand{\grothendieckSpringerResolutionForTHISPARABOLIC}[1][]{\tilde{\mathfrak{g}}_{#1}}
\newcommand{\grothendieckSpringerResolutionREGULARPARTFORBOTHForTHISPARABOLIC}[1][]{\tilde{\mathfrak{g}}_{#1\text{reg}}^{\text{reg}}}
\newcommand{\LP}{\mathfrak{p}}

\newcommand{\baseOfValpha}{Y_{\alpha}}
\newcommand{\fixedPointsForTildeTActionOnCompactifiedLineBundle}{\compactifiedLineBundleInGrGforQuasiMinusculeWeight^{\tilde{T}}}
\newcommand{\tildeTEquivariantHOMOLOGYofCompactifiedLineBundle}{H^{\tilde{T}}_{BM}(\compactifiedLineBundleInGrGforQuasiMinusculeWeight)}
\newcommand{\setOfPairsGivingFixedPointsOfCompactifiedLineBundle}{\mathcal{P}_{\theta}}
\newcommand{\tildeTEquivariatnCohomologyofCompactifiedLineBundle}{H_{\tilde{T}}^*(\compactifiedLineBundleInGrGforQuasiMinusculeWeight)}
\newcommand*{\HarishChandraBimodulesForTHISGROUPDefaultsToG}[1][G]{\text{HC}_{#1}}

\newcommand{\WhitofLGMod}{\text{Whit}(\LG\text{-mod})}
\newcommand{\Ug}{U\LG}
\newcommand{\Uhbarg}{U_{\hbar}\LG}
\newcommand{\Uhbarn}{U_{\hbar}\LN}
\newcommand{\Uhbarnminus}{U_{\hbar}\LN^-}
\newcommand{\filteredVectorSpacesAskhbarModCompatibly}{k[\hbar]\text{-mod}_{\text{gr}}}
\newcommand{\AsymptoticHarishChandraBimodulesForTHISGROUPDefaultsToG}[1][G]{\text{HC}_{G, \hbar}}

\newcommand{\Zhbarg}{Z_{\hbar}\LG}

\maketitle
\begin{abstract}
We prove that the algebra of functions on the cotangent bundle $T^*(G/U_P)$ of the parabolic base affine space for a reductive group $G$ and a parabolic subgroup $P$ is isomorphic to the subalgebra of the functions on $G \times L \times \mathfrak{l}\sslash L$ which are invariant under a certain action of the group scheme of universal centralizers on $G$, where $L$ is a Levi subgroup of $P$ and $\mathfrak{l}$ is its Lie algebra, upgrading an isomorphism of Ginzburg and Kazhdan simultaneously to the parabolic and the modular setting. We also derive a related isomorphism for the partial Whittaker cotangent bundle of G, which proves a conjecture of Devalapurkar.
\end{abstract}
\newcommand{\LHd}{\mathfrak{h}^*}
\renewcommand{\F}{\mathcal{F}}
\newcommand{\overlineU}{\overline{U}}
\newcommand{\LUbar}{\overline{\mathfrak{u}}}
\section{Introduction}
The main result of this article, \cref{thm: intro main}, generalizes an \lq implosion\rq{} description of \cite{GinzburgKazhdanDifferentialOperatorsOnBasicAffineSpaceandtheGelfandGraevAction} for the functions on the cotangent bundle of the basic affine space of a complex reductive group simultaneously to the parabolic setting and to the modular setting. Using \cref{thm: intro main}, we also derive an implosion description for the functions on the \textit{partial Whittaker cotangent bundle} of a reductive group, stated precisely in \cref{Isomorphism Induced by L Whittaker Reduction Corollary}, which proves an isomorphism whose existence was conjectured by Devalapurkar \cite[Conjecture 3.6.15]{Devalapurkar-ku-relative-Langlands}. 

Before stating our main theorem, we set some notation. Let $G$ denote some reductive group defined over some algebraically closed field $k$ whose characteristic does not lie in some \lq small\rq{} set of characteristics described explicitly in Section \labelcref{Notation Subsection} (for example, if $G = \GL_n$ we impose no restriction on the characteristic of $k$) and let $P$ denote some parabolic subgroup of $G$. Choose some Levi factor $L$ in $P$, so that we have a semidirect product decomposition $U_P \rtimes L \cong P$ where $U_P$ is the unipotent radical of $P$. Let $\LG, \LP, \mathfrak{l}$ and $\mathfrak{u}_P$ denote the Lie algebras of their respective groups. We set $\mathfrak{c}_L := \Spec(\Sym(\mathfrak{l})^L)$, and denote by $J_G$ the \textit{group scheme of universal centralizers} studied in \cite[Section 2]{NgoLeLemmeFondamentalPourLesAlgebresdeLie}, which we describe more precisely in Section \labelcref{Notation Subsection}. The main result of this article is the following: 
\begin{Theorem}\label{thm: intro main}
    There is an action of $J_G$ on $G \times \mathfrak{c}_L \times L$ inducing an isomorphism of algebras \begin{equation}\label{Isomorphism Without Whittaker}\mathcal{O}(T^*(G/U_P)) \cong \mathcal{O}(G \times \mathfrak{c}_L \times L)^{J_G}\end{equation} compatible with the natural actions of $G$ and $L$.
\end{Theorem}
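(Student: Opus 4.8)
The plan is to deduce \cref{thm: intro main} from Kostant--Whittaker descent applied to the left $G$-action on $X:=T^*(G/U_P)$, reconstructing $\mathcal{O}(X)$ from the Whittaker Hamiltonian reduction of $X$ together with its residual $J_G$-symmetry, in the spirit of \cite{GinzburgKazhdanDifferentialOperatorsOnBasicAffineSpaceandtheGelfandGraevAction} but carried out uniformly in the parabolic $P$ and in the characteristic of $k$. First I would present $X$ concretely: the identification $T^*(G/U_P)\cong T^*G/\!\!/_0 U_P$, the Hamiltonian reduction of the right $U_P$-action at $0$, becomes after left-trivializing $T^*G\cong G\times\mathfrak{g}^*$ the space $X\cong G\times^{U_P}(\mathfrak{g}/\mathfrak{u}_P)^{*}=G\times^{U_P}\mathfrak{u}_P^{\perp}$, carrying commuting left $G$- and right $L=P/U_P$-actions, with left moment map $\mu_G\colon X\to\mathfrak{g}^*$ induced by $[g,\xi]\mapsto \mathrm{Ad}_g^*\xi$. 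Since $G\cdot\mathfrak{u}_P^{\perp}=\mathfrak{g}^*$, the map $\mu_G$ is dominant and its image meets the regular locus $\mathfrak{g}^{*,\reg}$ densely; this is the nondegeneracy one needs to run descent.

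Next I would compute the Whittaker reduction $X/\!\!/_\psi N^-$. By Kostant transversality, valid under the hypothesis on $\operatorname{char}(k)$, the translate $\psi+(\mathfrak{n}^{-})^{\perp}$ is freely swept out by $N^-$ from the Kostant slice, so for a Hamiltonian $G$-space on which $N^-$ acts freely enough Whittaker reduction coincides with restriction of the moment map to the slice; in particular $X/\!\!/_\psi N^-\cong \mu_G^{-1}(\kappa_G(\mathfrak{c}_G))$. This slice-preimage is cut out of $G\times^{U_P}\mathfrak{u}_P^{\perp}$ by $\mathrm{Ad}_g^*\xi\in\kappa_G(\mathfrak{c}_G)$, has dimension $\dim L+\dim\mathfrak{c}_L$, and inherits the right $L$-action together with a $J_G$-action from the reduction. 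The key claim I would then prove is that $X/\!\!/_\psi N^-\cong L\times\mathfrak{c}_L$ compatibly with these $L$- and $J_G$-actions. This rests on the compatibility of the Kostant--Whittaker data of $G$ and of $L$ across $P$: the finite flat Chevalley restriction $\mathfrak{c}_L\to\mathfrak{c}_G$ for the Levi (again using the characteristic hypothesis), together with the observation that after the $U_P$-quotient the locus $\{\xi\in\mathfrak{u}_P^{\perp},\ \mathrm{Ad}_g^*\xi\in\kappa_G(\mathfrak{c}_G)\}$ is an $L$-torsor over a base isomorphic to $\mathfrak{c}_L$ — the residual right $L$-moment map composed with $\mathfrak{l}^*\to\mathfrak{c}_L$ furnishing the projection — and such a torsor over the affine space $\mathfrak{c}_L$ is trivial.

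Finally I would invoke the Kostant--Whittaker descent equivalence established earlier in the paper: for a Hamiltonian $G$-space $Y$ whose moment map generically lands in $\mathfrak{g}^{*,\reg}$, the Kostant section gives $Y\cong G\times^{J_G}(Y/\!\!/_\psi N^-)$ (via $\mathfrak{g}^{*,\reg}\cong G\times^{J_G}\mathfrak{c}_G$), hence an algebra isomorphism $\mathcal{O}(Y)\cong\mathcal{O}(G\times(Y/\!\!/_\psi N^-))^{J_G}$, where $J_G$ acts on the $G$-factor by right translation by centralizers through the $\mathfrak{c}_G$-structure and on $Y/\!\!/_\psi N^-$ by its intrinsic action; for $P=B$ and $\operatorname{char}(k)=0$ this is exactly the implosion isomorphism of Ginzburg--Kazhdan. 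Applying it with $Y=X$ and plugging in $X/\!\!/_\psi N^-\cong L\times\mathfrak{c}_L$ produces $\mathcal{O}(T^*(G/U_P))\cong\mathcal{O}(G\times L\times\mathfrak{c}_L)^{J_G}$, and chasing the left $G$- and right $L$-actions through the identifications above shows the isomorphism intertwines them, which is the compatibility asserted in \cref{thm: intro main}.

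The main obstacle is the identification $X/\!\!/_\psi N^-\cong L\times\mathfrak{c}_L$ together with the correct $J_G$- and $L$-equivariant structure: this is the genuinely new point beyond \cite{GinzburgKazhdanDifferentialOperatorsOnBasicAffineSpaceandtheGelfandGraevAction}, amounting to an interpolation between the Kostant--Whittaker theories of $G$ and $L$ through the unipotent radical $U_P$, and it is also where the modular hypotheses bite hardest: one needs smoothness of $\mathfrak{g}^{*,\reg}$ with complement of codimension $\ge 2$ (to pass freely between $\mathcal{O}(X)$ and $\mathcal{O}$ of regular loci), freeness of the relevant $U_P$- and $N^-$-actions, finite flatness of $\mathfrak{c}_L\to\mathfrak{c}_G$, and normality of the affine closures involved, each of which can fail precisely at the small primes excluded in Section~\labelcref{Notation Subsection}. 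A secondary point is to confirm that $X$, in its $G\times^{U_P}\mathfrak{u}_P^{\perp}$ presentation, meets the exact nondegeneracy hypothesis under which the descent equivalence is proved.
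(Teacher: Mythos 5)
Your overall route coincides with the paper's: use Kostant--Whittaker descent over $\LGd_{\reg}$ to reconstruct a $G$-equivariant algebra from its Whittaker reduction together with its residual $J_G$-action, identify the Whittaker reduction of $T^*(G/U_P)$ with $L\times\mathfrak{c}_L$, and recover global functions from the regular locus. The difficulty is that the two steps carrying the real content are asserted rather than proved. First, your ``key claim'' that the residual $L$-moment map composed with $\mathfrak{l}^*\to\mathfrak{c}_L$ exhibits $T^*(G/U_P)\sslash_{\psi}\overline{U}$ as an $L$-torsor over $\mathfrak{c}_L$ is precisely the new geometric input of the theorem, and calling it an ``observation'' leaves it unsupported. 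The paper proves it by factoring the moment map through the parabolic Grothendieck--Springer space $\tilde{\g}_P=G\times^P(\LG/\mathfrak{u}_P)^*$: the map $T^*(G/U_P)\to\tilde{\g}_P$ is an $L$-torsor by construction, and over the regular locus there is a Cartesian square identifying $\tilde{\g}_P^{\reg}$ with $\LGd_{\reg}\times_{\mathfrak{c}}\mathfrak{c}_L$ (Kostant; \cite[Remark 3.5.4]{RicheKostantSectionUniversalCentralizerandModularDerivedSatakeEquivalence} in good characteristic), so base change along $\kappa$ yields the torsor statement; some argument of this kind is needed. Second, your trivialization ``such a torsor over the affine space $\mathfrak{c}_L$ is trivial'' is not a free fact: triviality of torsors under a reductive group over $\mathbb{A}^n$ is a Raghunathan/Bass--Quillen-type theorem that is delicate, particularly over fields of positive characteristic, which is exactly the setting you are claiming to cover; moreover you would want the trivialization to be the one coming from the Levi's own Kostant--Whittaker data in order to track the $L$- and $\Sym(\mathfrak{l})$-structures. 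The paper avoids this entirely by restricting to the big cell $\overline{U}^PP/U_P$, where the reduction is explicitly $T^*(\overline{U}^L_{\psi_L}\backslash L)\cong L\times\mathfrak{c}_L$, and then using that any map of $L$-torsors over $\mathfrak{c}_L$ is an isomorphism.

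A further gap: the descent statement you invoke, $\mathcal{O}(Y)\cong\mathcal{O}(G\times(Y\sslash_{\psi}\overline{U}))^{J_G}$ for any Hamiltonian $Y$ with generically regular moment map, is only valid for $Y_{\reg}:=Y\times_{\LGd}\LGd_{\reg}$, since the Whittaker reduction only sees the fiber over the Kostant slice; the passage from $\mathcal{O}(Y_{\reg})$ to $\mathcal{O}(Y)$ is where a codimension bound must enter, and the bound you list among your ``obstacles'' is not the right one. What is needed is that the complement of $(\LG/\mathfrak{u}_P)^*_{\reg}$ inside $(\LG/\mathfrak{u}_P)^*$ has codimension at least two: codimension $\geq 2$ of the irregular locus in $\LGd$ says nothing after intersecting with the linear subspace $\mathfrak{u}_P^{\perp}$, where a priori a divisor of irregular elements could appear. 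This is the paper's codimension lemma, whose modular proof requires the input from \cite{BezrukavnikovRicheAffineBraidGroupActionsonDerivedCategoriesofSpringerResolutions}; combined with normality of $T^*(G/U_P)$ it gives $\mathcal{O}(T^*(G/U_P))\cong\mathcal{O}(T^*(G/U_P)_{\reg})$. Since both this and the torsor claim are deferred to your closing list of difficulties, the proposal as written omits the proofs of exactly the two points that go beyond \cite{GinzburgKazhdanDifferentialOperatorsOnBasicAffineSpaceandtheGelfandGraevAction}.
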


In fact, we show slightly more: we show that the right hand side of \cref{Isomorphism Without Whittaker} naturally acquires a $\Sym(\LG \oplus \mathfrak{l})$-algebra structure and show this isomorphism is an isomorphism of $\Sym(\LG \oplus \mathfrak{l})$-algebras.  

The ring $\mathcal{O}(T^*(G/U_P))$ has been previously studied due to its appearance in the geometric Langlands program \cite{MaceratoLEviEquivariantRestrictionofSphericalPerverseSheaves}, \cite{Devalapurkar-ku-relative-Langlands} as well in the study of 3d $\mathcal{N} = 4$ supersymmetric gauge theories and their Coulomb branches \cite{BourgetDancerGrimmingerHananyZhongPartialImplosionsandQuivers}, \cite{GannonWilliamsDifferentialOperatorsOnBaseAffineSpaceofSLnandQuantizedCoulombBranches}, \cite{DancerGrimmingerMartensZhongComplexSymplectic}. Our result may be useful in studying in the conjectures made in \cite{BourgetDancerGrimmingerHananyZhongPartialImplosionsandQuivers} and \cite{DancerGrimmingerMartensZhongComplexSymplectic}. Moreover, as we explain further in \cref{BZSV Connection}, one can use \cref{thm: intro main} to derive a natural candidate for the dual of the Hamiltonian $G$-variety $T^*(G_{\mathbb{C}}/L_{\mathbb{C}})$ in the relative Langlands duality program \cite{BenZviSakleredisVenkateshRelativeLanglandsDuality}, where $L_{\mathbb{C}}$ is a Levi subgroup of $G_{\mathbb{C}}$.

\subsection{Acknowledgments}
This project grew out of discussions with Sanath Devalapurkar, who decided not to sign it in the capacity of author. I would like to especially thank him for numerous useful discussions related to this paper; in particular, his suggestion to use Kostant-Whittaker descent (see Section \labelcref{Kostant Whittaker Descent Subsection}) greatly simplified the original argument the author had in mind to prove \cref{thm: intro main}. I would also like to thank Victor Ginzburg, Mark Macerato, Kendric Schefers, and Aaron Slipper for interesting and useful comments.

\section{Proof of the Main Theorem}
\newcommand{\bigCell}{\mathscr{B}}
\subsection{Notation}\label{Notation Subsection}  We assume $G$ is a pinned reductive group over an algebraically closed field of characteristic $p \geq 0$ which satisfies \cite[Condition (C4)]{RicheKostantSectionUniversalCentralizerandModularDerivedSatakeEquivalence}. In particular, we have chosen a  maximal torus $T$, a choice of Borel subgroup $\overline{B}$ containing $T$, and a simple root vector $f_i$ in each simple root space of $\LUbar := \text{Lie}(\overlineU)$, where $\overline{U} := [\overline{B}, \overline{B}]$. Let $B$ be the opposite Borel to $\overline{B}$ which contains $T$. Let $X_{\bullet}(T)$ denote the lattice of cocharacters and $Z\Phi^{\vee}$ denote the coroot lattice. By definition, $p$ satisfies condition (C4) if and only if $p$ is good for $G$ in the sense of \cite[Definition 4.22]{JantzenRepresentationsofAlgebraicGroups} (which is automatically satisfied if $p > 5$ or $p = 0$), the quotient $X_{\bullet}(T)/Z\Phi^{\vee}$ has no $p$-torsion (which in \cite[Section 2.2]{RicheKostantSectionUniversalCentralizerandModularDerivedSatakeEquivalence} is observed to hold if the derived subgroup of $G$ is simply connected) and there exists a $G$-equivariant isomorphism $\LG \xrightarrow{\sim} \LGd$. Let $\mu_{\LGd}: \LGd \to \LUbar^*$ denote the map induced by pullback of the inclusion $\LUbar \subseteq \LG$.

\subsubsection{Parabolic Subgroup Notation} Fix a subset $I \subseteq \Delta$ of the simple roots $\Delta$. Let $P \supseteq B$ denote the standard parabolic subgroup determined by $I$, or, in other words, the smallest closed subgroup scheme containing $B$ whose Lie algebra contains the $f_i$. 
Let $U_P$ denote the unipotent radical of $P$ and $L$ denote the quotient $P/U_P$. Our pinning gives a Levi decomposition $P \xleftarrow{\sim} U_P \rtimes L$, so that we may identify $L$ as a subgroup of $G$.  This semidirect product decomposition in turn indues a semidirect product decomposition $U \cong U_P \rtimes U_L$ where $U$ is the unipotent radical of $B$. In particular, we have a group isomorphism $U/U_P \cong U_L$ where $U_L$ is the unipotent radical of $L$. Letting $\overline{P}$ denote the unique parabolic subgroup conjugate to $P$ and containing $\overline{B}$ and $\overline{U}^P$ denote its unipotent radical, we have a similar semidirect product decomposition $\overline{U} \xleftarrow{\sim} \overline{U}^P \rtimes \overline{U}^L$. 

Our pinning gives a canonical identification $\overline{U}/[\overline{U}, \overline{U}] \cong \prod_{\Delta} \mathbb{G}_a$, which follows using the isomorphism of \cite[II.1.7(1)]{JantzenRepresentationsofAlgebraicGroups}, say, 
and so using this identification we may define the character $\psi$ as the composite \[\overline{U} \xrightarrow{} \overline{U}/[\overline{U}, \overline{U}] \cong \prod_{\Delta} \mathbb{G}_a \xrightarrow{\Sigma} \mathbb{G}_a\] where $\Sigma$ is the sum map. We use the notation $d\psi_e: \LUbar \to \mathbb{A}^1$ for the map induced by $\psi$ on the underlying Lie algebras. 

We define $\psi_P := \psi|_{\overline{U}^P}$ and let $\psi_L: \overline{U}/\overline{U}^P \to \mathbb{G}_a$ denote the character $\psi - \psi_P$, which we regard as a character on $\overline{U}^{L}$. 

\subsubsection{Kostant Section} We choose, once and for all, a Kostant section \[\kappa: \mathfrak{c} := \Spec(\Sym(\LG)^G) \to \mu_{\LGd}^{-1}(d\psi_e) := \mathfrak{g}^* \times_{\LUbar^*} \{d\psi_e\}\] which splits the quotient map $q_G: \LGd \to \mathfrak{c}$, which we may do by for example using our $G$-equivariant isomorphism $\LG \xrightarrow{\sim} \LGd$ and the results of \cite[Section 3]{RicheKostantSectionUniversalCentralizerandModularDerivedSatakeEquivalence}. We let $J_G$ denote the centralizer of this Kostant section: in other words, we define $J_G$ as the closed subscheme of $G \times \mathfrak{g}^*$ for which the diagram
$$\xymatrix{
    J_G \ar[r] \ar[d]^{\subseteq} & \mathfrak{c} \ar[d]^{\xi \mapsto (\kappa(\xi), \kappa(\xi))} \\
   G \times \LGd \ar[r]^{\mathrm{act}, \mathrm{proj}} & \g^\ast \times \g^{\ast}.
    }$$ is Cartesian. Observe that $J_G$ naturally acquires the structure of a group scheme over $\mathfrak{c}$.

As above, we let $\mathfrak{c}_L := \Spec(\Sym(\mathfrak{l})^L)$, which we always view as a scheme over $\mathfrak{c}$ by the Chevalley restriction map. Our choice of Kostant section identifies $J_G$ with the \textit{group scheme of universal centralizers} studied in \cite[Section 2]{NgoLeLemmeFondamentalPourLesAlgebresdeLie}, as explained in for example \cite[Section 3.3]{RicheKostantSectionUniversalCentralizerandModularDerivedSatakeEquivalence}. 

\subsubsection{Notation for Whittaker Reduction}Let $N$ denote some arbitrary unipotent group. If $X$ is some variety equipped with a free $N$-action and we are given an $N$-equivariant map $\mu: X \to \mathfrak{n}^*$, for any (additive) character $\alpha: N \to \mathbb{G}_a$ we set \[X\sslash_{\alpha} N := (X \times_{\mathfrak{n}^{*}} \{d\alpha_e\})/N.\] where $d\alpha_e$ is the induced character on the respective Lie algebras. If $X = T^*(Y)$ for some $Y$ with an $N$-action, we also set $T^*(Y/_{\alpha}N) := T^*(Y)\sslash_{\alpha} N$.

\subsection{Whittaker Reduction}\label{Whit Subsection}In this section, we state a small upgrade of our main result, \cref{Restated Theorem Intro Main for Whittaker}, and argue that implies \cref{thm: intro main}:

\begin{Theorem}\label{Restated Theorem Intro Main for Whittaker}
    There is an action of $J_G$ on $T^*(\overline{U}^L_{\psi_L}\backslash L)$ for which the induced diagonal $J_G$-action induces an isomorphism of $\Sym(\LG \oplus \mathfrak{l})$-algebras
    $$\mathcal{O}(T^\ast(G/U_P)) \cong \mathcal{O}(T^*(G/_{\psi}\overline{U}) \times_{\mathfrak{c}} T^*(\overline{U}^L_{\psi_L}\backslash L))^{J_G}$$ compatible with the actions of $G$ and $L$.
\end{Theorem}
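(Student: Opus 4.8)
The engine is \emph{Kostant--Whittaker descent} for the $G$-action, applied to $X := T^*(G/U_P)$, a Hamiltonian $G$-space via the left $G$-action and carrying a commuting right $L$-action. The principle is that for a well-behaved Hamiltonian $G$-space $X$ the Whittaker reduction $\mathfrak{r}(X) := X\sslash_\psi\overline{U}$ — a scheme over $\mathfrak{c}$ whose fiber over $\xi$ is $\mu_X^{-1}(\kappa(\xi))$, acted on by $(J_G)_\xi = Z_G(\kappa(\xi))$, hence a $J_G$-scheme over $\mathfrak{c}$ — remembers $X$: the object $T^*(G/_\psi\overline{U}) \cong G\times\mathfrak{c}$, with its commuting left $G$- and right $J_G$-actions coming from the Kostant-slice isomorphism $G\times^{J_G}\kappa(\mathfrak{c})\xrightarrow{\sim}\mathfrak{g}^*_{\mathrm{reg}}$, is an invertible bimodule, so that there is a natural isomorphism of $\Sym(\mathfrak{g})$-algebras
$$\mathcal{O}(X)\;\cong\;\mathcal{O}\bigl(T^*(G/_\psi\overline{U})\times_\mathfrak{c}\mathfrak{r}(X)\bigr)^{J_G}$$
compatible with any symmetry of $X$ commuting with the $G$-action (here the right $L$-action and its moment map $X\to\mathfrak{l}^*$). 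In the modular setting this rests on Riche's construction of $\kappa$ and $J_G$ under hypothesis (C4) together with the relevant codimension estimate on $\mathfrak{g}^*\setminus\mathfrak{g}^*_{\mathrm{reg}}$, valid in all the characteristics we allow. Granting this, \cref{Restated Theorem Intro Main for Whittaker} \emph{reduces to the single geometric identification}
$$(\ast)\qquad \mathfrak{r}\bigl(T^*(G/U_P)\bigr)\;\cong\;T^*(\overline{U}^L_{\psi_L}\backslash L)$$
as $J_G$-schemes over $\mathfrak{c}$ compatible with the right $L$-actions; the $J_G$-action on the right-hand side is then \emph{defined} by transport of structure — this is exactly the action asserted in the statement — and the $\Sym(\mathfrak{g}\oplus\mathfrak{l})$-structure is read off from the residual $G$- and $L$-moment maps on the two factors.

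To prove $(\ast)$ I would reduce in stages, using $\overline{U}^P\triangleleft\overline{U}$ with $\overline{U}/\overline{U}^P\cong\overline{U}^L$ and $\psi=\psi_P+\psi_L$:
$$\mathfrak{r}\bigl(T^*(G/U_P)\bigr)\;=\;\bigl(T^*(G/U_P)\sslash_{\psi_P}\overline{U}^P\bigr)\sslash_{\psi_L}\overline{U}^L.$$
The first-stage claim is $T^*(G/U_P)\sslash_{\psi_P}\overline{U}^P\cong T^*L$, with the right $L$-action intact and a residual left $L$-action from the surviving Levi of $\overline{P}$. Heuristically: the open cell $\overline{U}^P\cdot P\subseteq G$ exhibits $\overline{U}^P\times L$ as a dense open of $G/U_P$, and $T^*\overline{U}^P\sslash_{\psi_P}\overline{U}^P$ is a point, so generically $T^*(G/U_P)\sslash_{\psi_P}\overline{U}^P\approx T^*L$. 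To make this precise one may instead write $T^*(G/U_P)=T^*G\sslash_0 U_P^{\mathrm{right}}$ and use $T^*G\sslash_\psi\overline{U}^{\mathrm{left}}\cong\mathfrak{c}\times G$ (residual right $G$-, left $J_G$-action) to present the full reduction as
$$\{(c,g)\in\mathfrak{c}\times G:\Ad_{g^{-1}}(e_c)\in\overline{\mathfrak{p}}\}\big/U_P,\qquad e_c=\kappa(c),$$
where $\overline{\mathfrak{p}}\cong(\mathfrak{g}/\mathfrak{u}_P)^*$ is a parabolic subalgebra with Levi $\mathfrak{l}$ and nilradical $\overline{\mathfrak{u}}^P$. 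The key input is the fact — needing care in characteristic $p$ — that a $G$-regular element of $\overline{\mathfrak{p}}$ has $L$-regular image in $\mathfrak{l}=\overline{\mathfrak{p}}/\overline{\mathfrak{u}}^P$; projecting to the successive $\overline{\mathfrak{u}}^P$-levels trivializes the $U_P$- and $\overline{U}^P$-directions and identifies this quotient with $T^*L$, after which the remaining second stage is the principal Whittaker reduction $T^*L\sslash_{\psi_L}\overline{U}^L=T^*(\overline{U}^L_{\psi_L}\backslash L)$ along the residual left $L$-action. Tracking the $G$-, $L$- and $J_G$-orbits through the stages establishes $(\ast)$ and pins down the $J_G$-action.

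The main obstacle is $(\ast)$ itself in the modular setting: one must verify (i) that the loci appearing in the staged Hamiltonian reductions are free and the reductions flat, so $\mathcal{O}$ of each quotient is the honest invariant ring and the decomposition into stages is legitimate — this is where good-prime hypotheses and Riche's modular Kostant-section results enter; (ii) the regularity lemma (that $G$-regular elements of $\overline{\mathfrak{p}}$ project to $L$-regular elements of $\mathfrak{l}$) over a field satisfying (C4); and (iii) the compatibility of the three commuting actions across the reduction, in particular identifying the transported $J_G$-action with the one built from the universal-centralizer functoriality map $J_G\times_\mathfrak{c}\mathfrak{c}_L\to J_L$ (followed by left translation on $L$ along $J_L\to L$). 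Granting $(\ast)$, the deduction of \cref{thm: intro main} from \cref{Restated Theorem Intro Main for Whittaker} is routine: Kostant--Whittaker descent for $G$ and for $L$ gives trivializations $T^*(G/_\psi\overline{U})\cong G\times\mathfrak{c}$ and $T^*(\overline{U}^L_{\psi_L}\backslash L)\cong\mathfrak{c}_L\times L$, whence $T^*(G/_\psi\overline{U})\times_\mathfrak{c}T^*(\overline{U}^L_{\psi_L}\backslash L)\cong G\times\mathfrak{c}_L\times L$ and the diagonal $J_G$-action becomes right translation on $G$ and (via $J_G\times_\mathfrak{c}\mathfrak{c}_L\to J_L\to L$) left translation on $L$, recovering exactly the $J_G$-action and $\Sym(\mathfrak{g}\oplus\mathfrak{l})$-structure of \cref{thm: intro main}.
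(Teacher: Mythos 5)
Your overall strategy coincides with the paper's (Kostant--Whittaker descent for the left $G$-action, plus an identification of the full Whittaker reduction of $T^*(G/U_P)$ with $T^*(\overline{U}^L_{\psi_L}\backslash L)$), but there is a genuine gap at the very first step. The descent equivalence (\cref{Kostant-Whittaker Reduction Is Exact Monoidal Equivalence of Categories with Explicitly Constructed Inverse}, \cref{Any algebra object on regular locus is recovered from its Kostant-Whittaker Reduction}) lives over $\LGd_{\mathrm{reg}}$, so your ``principle'' only yields
$\mathcal{O}\bigl(T^*(G/U_P)\times_{\LGd}\LGd_{\mathrm{reg}}\bigr)\cong\mathcal{O}\bigl(T^*(G/_{\psi}\overline{U})\times_{\mathfrak{c}}\mathfrak{r}(X)\bigr)^{J_G}$.
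To conclude for $\mathcal{O}(T^*(G/U_P))$ itself one must know that restriction to the regular part is an isomorphism, and the codimension estimate you invoke --- on $\LGd\setminus\LGd_{\mathrm{reg}}$ --- does not give this: what is needed is that the non-regular locus of $(\LG/\mathfrak{u}_P)^*$ has codimension at least two \emph{inside} $(\LG/\mathfrak{u}_P)^*$, and a priori the irregular locus of $\LGd$ could meet this linear subspace in a divisor. This is exactly \cref{Complement of Regular Locus of Parabolic Has Codimension Two} and \cref{Global Functions on Cotangent Bundle is Global Functions on Cotangent Bundle of G Mod UP} in the paper, whose proof is not formal (it restricts to $(\LG/\mathfrak{u})^*$ and uses the Bezrukavnikov--Riche description of the complement of the regular semisimple locus there, valid for good $p$). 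As stated, the claim that Whittaker reduction ``remembers $X$'' for a well-behaved Hamiltonian $G$-space is false without such an input, so this ingredient must be stated and proved rather than absorbed into ``well-behaved''.

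Your route to $(\ast)$ is also genuinely different from the paper's, and is only sketched. The paper never performs the intermediate reduction by $(\overline{U}^P,\psi_P)$ --- whose freeness/flatness you yourself flag as obstacle (i), and which is delicate because $\psi_P$ is not a nondegenerate character of $\overline{U}^P$, so the level set need not be an $\overline{U}^P$-torsor away from the big cell; instead it only ever reduces by the full $(\overline{U},\psi)$, where the Kostant/Riche torsor statement applies (\cref{Scheme with Almost Hamiltonian G Action Has Easy Whittaker Reduction After Choosing Kostant Section}), and proves that restriction to the big cell $\overline{U}^PP/U_P$ induces an isomorphism on Whittaker reductions by a map-of-$L$-torsors argument over $\mathfrak{c}$ (\cref{Open Embedding of Big Cell Induces Isomorphism on Whittaker Cotangent Bundles}), the key geometric input being the Cartesian square identifying $\tilde{\mathfrak{g}}_P^{\mathrm{reg}}$ with $\LGd_{\mathrm{reg}}\times_{\mathfrak{c}}\mathfrak{c}_L$. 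That known square is essentially your ``regularity lemma'' (ii), so that input is available in the allowed characteristics; but your obstacles (i) and (iii) are precisely where the staged version would require substantial extra work that the paper's big-cell argument avoids. In sum: right engine, a workable but unproven alternative for the crux $(\ast)$, and one necessary ingredient (the parabolic codimension estimate) missing altogether.
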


To prove \cref{Restated Theorem Intro Main for Whittaker} implies \cref{thm: intro main}, we first prove the following elementary lemma, see also \cite[Lemma 3.2.3]{GinzburgKazhdanDifferentialOperatorsOnBasicAffineSpaceandtheGelfandGraevAction}:

\begin{Lemma}\label{Scheme with Almost Hamiltonian G Action Has Easy Whittaker Reduction After Choosing Kostant Section}
    If $X$ is a scheme with a $G$-action and a $G$-equivariant map $X \to \LGd$, then there is an isomorphism $X \sslash_{\psi} \overline{U} \cong X \times_{\LGd} \mathfrak{c}$.
\end{Lemma}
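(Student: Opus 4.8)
The plan is to reduce the statement to the structure of the Kostant slice $\mu_{\LGd}^{-1}(d\psi_e)$ as a trivial $\overline{U}$-torsor over $\mathfrak{c}$, and then to untwist a free unipotent action; this is exactly the strategy of \cite[Lemma 3.2.3]{GinzburgKazhdanDifferentialOperatorsOnBasicAffineSpaceandtheGelfandGraevAction}. First I would recall from \cite[Section 3]{RicheKostantSectionUniversalCentralizerandModularDerivedSatakeEquivalence} the modular Kostant slice theorem: under the running hypotheses on $p$, the chosen Kostant section $\kappa$ fits into an isomorphism of schemes over $\mathfrak{c}$
\[ a\colon \overline{U}\times\mathfrak{c}\;\xrightarrow{\ \sim\ }\;\mu_{\LGd}^{-1}(d\psi_e),\qquad (u,\xi)\longmapsto u\cdot\kappa(\xi). \]
In particular $\overline{U}$ acts freely on $\mu_{\LGd}^{-1}(d\psi_e)$, the $\overline{U}$-equivariant projection to $\mathfrak{c}$ exhibits $\mu_{\LGd}^{-1}(d\psi_e)$ as a trivial $\overline{U}$-torsor, and $\kappa$ is the unit section.

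Next I would unwind the definitions. The $\overline{U}$-equivariant map $X\to\LUbar^*$ implicit in the formation of $X\sslash_{\psi}\overline{U}=(X\times_{\LUbar^*}\{d\psi_e\})/\overline{U}$ is the composite $X\xrightarrow{\mu}\LGd\xrightarrow{\mu_{\LGd}}\LUbar^*$, so there is a $\overline{U}$-equivariant identification $X\times_{\LUbar^*}\{d\psi_e\}\cong X\times_{\LGd}\mu_{\LGd}^{-1}(d\psi_e)$, where $\overline{U}$ acts on $X$ through $\overline{U}\subseteq G$ and on the slice factor as above; in particular this action is free, since projection to $\mu_{\LGd}^{-1}(d\psi_e)$ is equivariant. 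Transporting along $a$, this scheme becomes $X\times_{\LGd}(\overline{U}\times\mathfrak{c})$ with $\overline{U}$ acting diagonally, via the restricted action on $X$ and by left translation on the $\overline{U}$-factor. I would then untwist: the assignment
\[ X\times_{\LGd}(\overline{U}\times\mathfrak{c})\;\longrightarrow\;\overline{U}\times\bigl(X\times_{\LGd,\,\kappa}\mathfrak{c}\bigr),\qquad (x,u,\xi)\longmapsto\bigl(u,\;u^{-1}\cdot x,\;\xi\bigr) \]
is well defined because $\mu(u^{-1}\cdot x)=u^{-1}\cdot(u\cdot\kappa(\xi))=\kappa(\xi)$, is an isomorphism with inverse $(u,y,\xi)\mapsto(u\cdot y,u,\xi)$, and intertwines the diagonal $\overline{U}$-action with left translation on the first factor alone. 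Passing to $\overline{U}$-quotients then yields $X\sslash_{\psi}\overline{U}\cong X\times_{\LGd,\,\kappa}\mathfrak{c}$, i.e. $X\times_{\LGd}\mathfrak{c}$ with $\mathfrak{c}$ viewed over $\LGd$ via the fixed Kostant section; and the isomorphism is visibly functorial in $X$ along $G$-equivariant maps over $\LGd$, which is what the subsequent applications use.

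The hard part is not really hard: the only non-formal input is the trivial-torsor statement of the first paragraph, which is precisely what condition (C4) and the existence of a $G$-equivariant isomorphism $\LG\xrightarrow{\sim}\LGd$ secure through \cite{RicheKostantSectionUniversalCentralizerandModularDerivedSatakeEquivalence}; everything else is bookkeeping with a free unipotent action. The one point worth stating carefully is that $(X\times_{\LUbar^*}\{d\psi_e\})/\overline{U}$ is taken as an fppf (equivalently, geometric) quotient and that this quotient is representable by a scheme — after the untwisting the action is free left translation on a direct group factor, so the quotient is represented by the complementary factor $X\times_{\LGd,\,\kappa}\mathfrak{c}$, and no extra hypotheses on $X$ (such as affineness) are needed.
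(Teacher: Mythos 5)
Your proposal is correct and is essentially the paper's own argument: both proofs rest on the single non-formal input that $\mu_{\LGd}^{-1}(d\psi_e)$ is a trivial $\overline{U}$-torsor over $\mathfrak{c}$ trivialized by $\kappa$ (Kostant, \cite[Proposition 3.2.1]{RicheKostantSectionUniversalCentralizerandModularDerivedSatakeEquivalence}), after rewriting $X\times_{\LUbar^*}\{d\psi_e\}$ as $X\times_{\LGd}\mu_{\LGd}^{-1}(d\psi_e)$ and quotienting by the diagonal $\overline{U}$-action. You merely make explicit the untwisting isomorphism and the representability of the quotient, which the paper leaves implicit.
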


\begin{proof}
    By definition, the scheme $X \sslash_{\psi} \overline{U}$ is obtained by the quotient of the scheme \[X \times_{\overline{\mathfrak{u}}^*} \{d\psi_e\} \cong X \times_{\LGd} (\LGd \times_{\overline{\mathfrak{u}}^*} \{d\psi_e\})
    \] by the diagonal $\overline{U}$-action. However, the product $(\LGd \times_{\overline{\mathfrak{u}}} \{d\psi_e\})$ is a $\overline{U}$-torsor which is trivialized by $\kappa$ \cite{KostantonWhittakerVectorsandRepresentationTheory}, \cite[Proposition 3.2.1]{RicheKostantSectionUniversalCentralizerandModularDerivedSatakeEquivalence}. 
    Therefore the quotient of this scheme is canonically isomorphic to $X \times_{\LGd} \mathfrak{c}$, as required. 
\end{proof}

Applying \cref{Scheme with Almost Hamiltonian G Action Has Easy Whittaker Reduction After Choosing Kostant Section} for both $G$ and $L$, we obtain an isomorphism \[\mathcal{O}(T^*(G/_{\psi}\overline{U}) \times_{\mathfrak{c}} T^*(\overline{U}^L_{\psi_L}\backslash L))^{J_G} \cong \mathcal{O}((G \times \mathfrak{c}) \times_{\mathfrak{c}} (L \times \mathfrak{c}_L))^{J_G}\] compatible with the actions of $G$ and $L$. Thus, identifying \[(G \times \mathfrak{c}) \times_{\mathfrak{c}} (L \times \mathfrak{c}_L) \cong G \times L \times \mathfrak{c}_L\] we transport the induced $J_G$-action and see that \cref{thm: intro main}, as well as its variant stated in the abstract, are implied by \cref{Restated Theorem Intro Main for Whittaker}.

\subsection{Kostant-Whittaker Descent}\label{Kostant Whittaker Descent Subsection}Observe that the natural action of $G \times G$ on $G \times \LGd$ induces an action of $G \times J_G$ on $G \times \mathfrak{c}$. In particular, to any $M \in \text{Rep}(J_G)$ we may define the quasicoherent sheaf \[(a_*(\mathcal{O}_{G \times \mathfrak{c}}) \otimes_{\mathcal{O}(\mathfrak{c})} M)^{J_G}\] on $\LGd_{\text{reg}}$, where $a: G \times \mathfrak{c} \to \mathfrak{g}_{\text{reg}}^*$ is the action map. Since $\mathcal{O}_{G \times \mathfrak{c}}$ is evidently flat as an $\mathcal{O}(\mathfrak{c})$-module, this quasicoherent sheaf canonically acquires a $G$-equivariant structure. In this section, we construct the following isomorphism:

\begin{Proposition}\label{Any algebra object on regular locus is recovered from its Kostant-Whittaker Reduction}
    To any $\F \in \QCoh(\LG_{\text{reg}})^G$, there is a canonical isomorphism \[\F \xrightarrow{\sim} (a_*(\mathcal{O}_{G \times \mathfrak{c}}) \otimes_{\mathcal{O}(\mathfrak{c})} \kappa^*(\F))^{J_G}\] which is an isomorphism of ring objects if $\F$ is a ring object in $\QCoh(\LG_{\text{reg}})^G$.
\end{Proposition}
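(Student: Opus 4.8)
The plan is to realize this as a form of faithfully flat descent along the Kostant section, using the fundamental fact (due to Kostant, and in the modular setting Riche) that the composite $\mathfrak{c} \xrightarrow{\kappa} \LGd_{\reg} \hookrightarrow \LGd$ lands in the regular locus and that the action map $a\colon G \times \mathfrak{c} \to \LGd_{\reg}$ is a smooth surjection whose fibers are torsors under the centralizer group scheme $J_G$. Concretely, $G \times \mathfrak{c} \to \LGd_{\reg}$ is a $J_G$-torsor (pulled back from $\mathfrak{c}$): this is exactly the content of the Cartesian square defining $J_G$ together with the fact that every regular element of $\LGd$ is conjugate to a unique point of the Kostant slice, and the stabilizer of $\kappa(\xi)$ is by definition $(J_G)_\xi$. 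Granting this, for any $\F \in \QCoh(\LGd_{\reg})^G$ descent along the fppf cover $a$ identifies $\F$ with the $J_G$-equivariant $\mathcal{O}_{G \times \mathfrak{c}}$-module $a^*\F$ together with its descent datum; but since the cover factors through $\mathfrak{c}$ (via $\kappa$ composed with the second projection, up to the $G$-action), $a^*\F$ is pulled back from the $J_G$-equivariant $\mathcal{O}(\mathfrak{c})$-module $\kappa^*\F$, and descent gives back $\F \cong (a_*a^*\F)^{J_G} = (a_*(\mathcal{O}_{G\times\mathfrak{c}}) \otimes_{\mathcal{O}(\mathfrak{c})} \kappa^*\F)^{J_G}$.

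The key steps, in order, are: (1) recall/prove that $a\colon G \times \mathfrak{c} \to \LGd_{\reg}$ is a $J_G$-torsor, i.e. the map $(a,\pr)\colon G\times\mathfrak{c} \to \LGd_{\reg}\times_{\mathfrak{c}}\mathfrak{c}$ becomes, after the base change defining $J_G$, an isomorphism onto the quotient — here one uses \cite{KostantonWhittakerVectorsandRepresentationTheory} and \cite[Section 3]{RicheKostantSectionUniversalCentralizerandModularDerivedSatakeEquivalence} for the modular case; (2) observe $a$ is faithfully flat (even smooth) since it is a torsor under a flat affine group scheme, so fppf descent of quasicoherent sheaves applies; (3) identify the category $\QCoh(\LGd_{\reg})^G$ with $J_G$-equivariant quasicoherent sheaves on $\mathfrak{c}$: the functor one way is $\F \mapsto \kappa^*\F$ (equivalently restriction along $\mathfrak{c} = \{1\}\times\mathfrak{c}\hookrightarrow G\times\mathfrak{c}$ followed by descent), and the functor back is $M \mapsto (a_*(\mathcal{O}_{G\times\mathfrak{c}}\otimes_{\mathcal{O}(\mathfrak{c})}M))^{J_G}$; (4) check these are mutually inverse, which is where the torsor property is used concretely, and that the unit of the adjunction is the asserted isomorphism $\F \xrightarrow{\sim} (a_*(\mathcal{O}_{G\times\mathfrak{c}})\otimes_{\mathcal{O}(\mathfrak{c})}\kappa^*\F)^{J_G}$; (5) note all constructions are symmetric monoidal — $\kappa^*$ is a symmetric monoidal functor and $(-)^{J_G}$ of a $J_G$-equivariant $\mathcal{O}(\mathfrak{c})$-algebra is again an algebra, and $a_*$ along an affine morphism preserves algebra structures — so a ring object maps to a ring object and the isomorphism is one of ring objects.

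The main obstacle, and the step deserving the most care, is Step (1): verifying that $a$ is genuinely a $J_G$-torsor over $\LGd_{\reg}$ rather than merely surjective with the right fibers, and doing so in the modular setting where one cannot invoke Lie-theoretic arguments over $\mathbb{C}$ wholesale. The cleanest route is: the square defining $J_G$ exhibits $J_G = (G\times\LGd)\times_{\LGd\times\LGd}\mathfrak{c}$ where the right vertical map is the diagonal through $\kappa$; restricting the action groupoid $G\times\LGd_{\reg}\rightrightarrows \LGd_{\reg}$ along $\kappa\colon\mathfrak{c}\to\LGd_{\reg}$ yields the groupoid $J_G\rightrightarrows\mathfrak{c}$, and because $\kappa$ meets every $G$-orbit in $\LGd_{\reg}$ (Kostant's transversality, valid under condition (C4) by \cite[Section 3]{RicheKostantSectionUniversalCentralizerandModularDerivedSatakeEquivalence}) this restricted groupoid is equivalent to the original, giving $\LGd_{\reg} \simeq (G\times\mathfrak{c})/J_G$ as stacks; faithful flatness of $a$ then follows since $\mathfrak{c}\to\LGd_{\reg}$ is (by transversality and a dimension count) faithfully flat and $G\times\mathfrak{c}\to\mathfrak{c}$ certainly is. Once this stacky identification is in hand, everything else is the formal bookkeeping of descent and monoidality outlined above.
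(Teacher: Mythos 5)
Your proposal follows essentially the same route as the paper: descent of quasicoherent sheaves along the smooth surjective action map $a\colon G\times\mathfrak{c}\to\LGd_{\reg}$ (equivalently the $J_G$-torsor structure coming from the Kostant slice meeting every regular orbit, which the paper imports from Riche's Lemma 3.3.1 and Proposition 3.3.11), yielding the monoidal equivalence $\kappa^*\colon\QCoh(\LGd_{\reg})^G\simeq\QCoh(\mathfrak{c})^{J_G}$ with the stated explicit inverse, the asserted map being the unit of this adjunction and monoidality handling ring objects. The only slip is the closing aside that $\kappa\colon\mathfrak{c}\to\LGd_{\reg}$ is faithfully flat (it is a section of the adjoint quotient map, of strictly smaller dimension, hence neither flat of the right relative dimension nor surjective); this is harmless, since the smooth surjectivity of $a$ that you actually need is already supplied by the references you cite in Step (1).
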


We do this after first proving the following lemma:

\begin{Lemma}\label{Kostant-Whittaker Reduction Is Exact Monoidal Equivalence of Categories with Explicitly Constructed Inverse}
    The functor $\kappa^*$ lifts to an exact monoidal equivalence of categories \[\kappa^*: \QCoh(\LG_{\text{reg}})^G \xrightarrow{\sim} \mathcal{O}(J_G)\text{-comod}\] whose inverse is given by the functor $M \mapsto (a_*(\mathcal{O}_{G \times \mathfrak{c}}) \otimes_{\mathcal{O}(\mathfrak{c})} M)^{J_G}$.
\end{Lemma}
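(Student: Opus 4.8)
The plan is to realize the quotient stack $[\LG_{\reg}/G]$ as the classifying stack $B_{\mathfrak{c}}(J_G)$ of the group scheme $J_G$ over $\mathfrak{c} = \Spec\mathcal{O}(\mathfrak{c})$, and then deduce the lemma from the standard description of quasi-coherent sheaves on a classifying stack. The geometric heart of the matter is that the action map $a\colon G\times\mathfrak{c}\to\LG_{\reg}$, $(g,\xi)\mapsto g\cdot\kappa(\xi)$, is a torsor under $J_G$, where $J_G$ acts on $G\times\mathfrak{c}$ over $\mathfrak{c}$ by right translation of the $G$-factor through the inclusion $J_G\hookrightarrow G\times\LGd\to G$. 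Granting this, $a$ is equivariant both for the left $G$-action on the first factor (which is free with quotient $\mathfrak{c}$) and for this $J_G$-action, and the two commute, so passing to quotients yields $[\LG_{\reg}/G]\simeq B_{\mathfrak{c}}(J_G)$.

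To establish that $a$ is a $J_G$-torsor I would check the three requirements. First, $a$ is surjective because the Kostant section meets every regular $G$-orbit; in the present modular generality this is part of \cite[Section 3]{RicheKostantSectionUniversalCentralizerandModularDerivedSatakeEquivalence} and uses condition (C4) (in particular that $\mathfrak{c}$ is an affine space of dimension $\operatorname{rk}(G)$). Second, the natural map
\[
(G\times\mathfrak{c})\times_{\LG_{\reg}}(G\times\mathfrak{c})\;\longrightarrow\;(G\times\mathfrak{c})\times_{\mathfrak{c}}J_G,\qquad \bigl((g_1,\xi),(g_2,\xi)\bigr)\longmapsto\bigl((g_1,\xi),\,g_1^{-1}g_2\bigr),
\]
is an isomorphism: the identity $g_1\kappa(\xi) = g_2\kappa(\xi)$ forces $g_1^{-1}g_2\in\Stab_G(\kappa(\xi)) = J_{G,\xi}$ by the very definition of $J_G$, and the inverse map is immediate. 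Third, $a$ is faithfully flat: both $G\times\mathfrak{c}$ and $\LG_{\reg}$ are smooth over $k$, every fibre of $a$ is a torsor under some $J_{G,\xi}$ and hence of dimension $\operatorname{rk}(G) = \dim\mathfrak{c} = \dim(G\times\mathfrak{c}) - \dim\LG_{\reg}$, so flatness follows from miracle flatness (the smoothness of $\mathfrak{c}$ and of $J_G\to\mathfrak{c}$ again being supplied by \cite{RicheKostantSectionUniversalCentralizerandModularDerivedSatakeEquivalence}). Together with the second point this is precisely the assertion that $a$ is a $J_G$-torsor; in fact $a$ is smooth.

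Applying $\QCoh(-)$ to the equivalence $[\LG_{\reg}/G]\simeq B_{\mathfrak{c}}(J_G)$ now gives an exact symmetric-monoidal equivalence $\QCoh(\LG_{\reg})^G\simeq\QCoh(B_{\mathfrak{c}}J_G)\simeq\mathcal{O}(J_G)\text{-comod}$, the last step being the standard identification of quasi-coherent sheaves on the classifying stack of a flat affine group scheme over an affine base with comodules over its Hopf algebra. To see that this equivalence is a lift of $\kappa^*$, observe that $\kappa$ factors as $\mathfrak{c}\xrightarrow{(e,\,\id)}G\times\mathfrak{c}\xrightarrow{a}\LG_{\reg}$, and that the resulting composite $\mathfrak{c}\to\LG_{\reg}\to[\LG_{\reg}/G]$ is exactly the tautological atlas $\mathfrak{c}\to B_{\mathfrak{c}}J_G$ classifying the trivial $J_G$-torsor; pulling back along it is the forgetful functor $\mathcal{O}(J_G)\text{-comod}\to\mathcal{O}(\mathfrak{c})\text{-mod}$, so the composite of our equivalence with this forgetful functor is $\kappa^*$. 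In particular $\kappa^*$ is exact, being a composite of an equivalence with the exact forgetful functor, and monoidal since $\QCoh$-pullback is. Finally, since $a$ is an affine morphism ($G\times\mathfrak{c}$ is affine and $\LG_{\reg}$ is separated), descent along the torsor $a$ of the $J_G$-equivariant sheaf $\mathcal{O}_{G\times\mathfrak{c}}\otimes_{\mathcal{O}(\mathfrak{c})}M$ attached to a comodule $M$ (with the linearization induced by the coaction) is computed by taking $J_G$-invariants of its pushforward, i.e. is $\bigl(a_*(\mathcal{O}_{G\times\mathfrak{c}}\otimes_{\mathcal{O}(\mathfrak{c})}M)\bigr)^{J_G}$; chasing the identifications shows this carries the evident $\mathcal{O}_{\LG_{\reg}}$-module and $G$-equivariant structures, so this functor is the inverse equivalence, and it matches ring objects to ring objects by monoidality.

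The main obstacle is the torsor claim for $a$ — more precisely its two modular inputs, the surjectivity of $a$ (the Kostant section meeting every regular orbit) and the flatness of $a$ via the smoothness and fibre dimension of $J_G\to\mathfrak{c}$ — both of which rest on the standing hypotheses on $p$ through \cite{RicheKostantSectionUniversalCentralizerandModularDerivedSatakeEquivalence}. Once these are in place, the rest is formal faithfully flat descent along a torsor.
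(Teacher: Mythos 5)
Your argument is correct and is essentially the route the paper takes: both rest on the action map $a\colon G\times\mathfrak{c}\to\LGd_{\mathrm{reg}}$ being smooth, surjective, and a torsor under $J_G$ with the commuting $G$- and $J_G$-actions on $G\times\mathfrak{c}$, followed by faithfully flat descent, which is exactly the content of the two adjoint equivalences \labelcref{Equivalence From Descent} and \labelcref{Equivalence from Stalk} in the paper (equivalently, the identification $[\LGd_{\mathrm{reg}}/G]\simeq[\mathfrak{c}/J_G]$). The only difference is one of packaging: you re-derive the torsor property (surjectivity, fibrewise freeness/transitivity, miracle flatness), whereas the paper outsources those modular inputs to \cite[Lemma 3.3.1, Proposition 3.3.11]{RicheKostantSectionUniversalCentralizerandModularDerivedSatakeEquivalence} and only makes the inverse explicit via the unit of the adjunctions.
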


\begin{proof}
    The fact that such an equivalence of categories exists is standard and is proved, for example, in \cite[Proposition 3.3.11]{RicheKostantSectionUniversalCentralizerandModularDerivedSatakeEquivalence}; we now explicitly compute its inverse. The action map $a$ is smooth and surjective \cite[Lemma 3.3.1]{RicheKostantSectionUniversalCentralizerandModularDerivedSatakeEquivalence}. Thus, using the computation of \cite[Proposition 3.3.11]{RicheKostantSectionUniversalCentralizerandModularDerivedSatakeEquivalence}, we see that descent theory gives adjoint equivalences of abelian categories \begin{equation}\label{Equivalence From Descent}a^*: \QCoh(\LGd_{\text{reg}})^{G} \leftrightarrow{} \QCoh(G \times \mathfrak{c})^{G \times J_G}: a_*(-)^{J_G}\end{equation} which thus are in particular exact. It is standard (and not difficult to check) that we have equivalences of abelian categories \begin{equation}\label{Equivalence from Stalk}(e, \text{id})^*: \QCoh(G \times \mathfrak{c})^{G \times J_G} \leftrightarrow{} \QCoh(\mathfrak{c})^{J_G}: p^*\end{equation} where $e: \Spec(k) \to G$ is the identity point and we identify $(e, \text{id})^*$ with $(-)^G$. 
    Therefore these functors are in particular adjoint. Combining these two adjunctions and using the canonical identification of $(e \times \text{id})^*a^*$ with $\kappa^*$ we obtain our desired inverse, as any adjoint to an equivalence of categories gives an inverse.
\end{proof}

\begin{proof}[Proof of \cref{Any algebra object on regular locus is recovered from its Kostant-Whittaker Reduction}]
    Our desired isomorphism is given by the unit of the monoidal adjunction of \cref{Kostant-Whittaker Reduction Is Exact Monoidal Equivalence of Categories with Explicitly Constructed Inverse}. Since $\kappa^*$ is a monoidal equivalence of categories, both $\kappa^*$ and its right adjoint are monoidal, and so the unit map induces an isomorphism of ring objects. Explicitly, this isomorphism is given by the composite \[(\mathcal{O}_{G \times \mathfrak{c}} \otimes_{\mathcal{O}(\mathfrak{c})} \kappa^*(\F))^{J_G} = (\mathcal{O}_{G \times \mathfrak{c}} \otimes_{\mathcal{O}(\mathfrak{c})} (\mathcal{O}(\mathfrak{c}) \otimes_{\mathcal{O}_{\LGd_{\text{reg}}}} \F))^{J_G} \xleftarrow{\sim} (\mathcal{O}_{G \times \mathfrak{c}} \otimes_{\mathcal{O}_{\LGd_{\text{reg}}}} \F)^{J_G} \xleftarrow{\sim} \F\] 
    where the first equivalence is given by the unit of \labelcref{Equivalence from Stalk} and the second equivalence is given is the tensor-hom adjunction given by \labelcref{Equivalence From Descent}.
\end{proof}

\subsection{Restriction to the Big Cell}
The inclusion \[\bigCell := \overline{U}^PP/U_P \xhookrightarrow{} G/U_P\] is an open embedding of a subscheme invariant under the $\overline{U}$-action. Thus we obtain an open embedding of the cotangent bundles $T^*(\overline{U}^PP/U_P) \xhookrightarrow{} T^*(G/U_P)$ which respects the induced Hamiltonian $\overline{U}$-action, and so we obtain an open embedding $j: T^*(\overline{U}{}_{\psi}\backslash\overline{U}^PP/U_P) \xhookrightarrow{} T^*(\overline{U}{}_{\psi}\backslash G/U_P)$. 

\begin{Proposition}\label{Open Embedding of Big Cell Induces Isomorphism on Whittaker Cotangent Bundles}
    The map $j$ is an isomorphism.
\end{Proposition}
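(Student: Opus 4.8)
The claim is that the open embedding $j\colon T^*(\overline{U}_\psi\backslash \overline{U}^P P/U_P) \hookrightarrow T^*(\overline{U}_\psi\backslash G/U_P)$ induced by the big-cell inclusion is actually an isomorphism. The strategy I would pursue is to reduce the statement to a statement about the cotangent bundle of $G/U_P$ itself before Whittaker reduction, and then to exploit the fact that, after imposing the Whittaker moment-map condition $\mu_{\overline{U}} = d\psi_e$, the $\overline{U}$-orbit of the big cell already fills up all of $G/U_P$ — i.e.\ that the locus of $G/U_P$ over which the moment map can take the nondegenerate value $d\psi_e$ is contained in $\overline{U}\cdot \overline{U}^P P/U_P$. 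More precisely, $j$ is automatically an open embedding and is $G$- and $L$-equivariant where relevant, so it suffices to check it is surjective (equivalently, bijective on points over the algebraically closed base field, since both sides are reduced and the map is an open immersion onto its image).

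**The geometric core.** The key geometric input is the classical fact about the open Bruhat cell: $\overline{U}^P P = \overline{U}^P P$ is open and dense in $G$, but more to the point, for the purposes of Whittaker reduction what matters is the moment map for the left $\overline{U}$-action on $T^*(G/U_P)$. Unwinding, a point of $T^*(G/U_P)$ lying over a coset $gU_P$ is a covector, and the $\overline{U}$-moment map condition $\mu = d\psi_e$ forces the covector to be ``nondegenerate'' in the direction of $\overline{U}$ acting at $gU_P$. I would argue that this nondegeneracy condition already forces $gU_P$ to lie in the big cell $\overline{U}^P P/U_P$ up to the left $\overline{U}$-action: concretely, write $G = \bigsqcup_{w} \overline{U} \dot w P$ (Bruhat decomposition relative to $P$), and show that on the cell $\overline{U}\dot w P / U_P$ with $w \neq e$, the stabilizer in $\overline{U}$ of a point has positive-dimensional unipotent part, so the restriction of $d\psi_e$ to the Lie algebra of that stabilizer is forced to vanish — but for $w \neq e$ this stabilizer contains a simple root subgroup on which $d\psi_e$ is nonzero, a contradiction. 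Hence no point of $T^*(G/U_P)$ over a non-big-cell coset can satisfy the moment-map equation, so $T^*(G/U_P)\times_{\overline{\mathfrak{u}}^*}\{d\psi_e\}$ is supported entirely over the big cell, and dividing by the (free) $\overline{U}$-action gives that $j$ is surjective.

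**Assembling the argument.** So the steps, in order, are: (1) record that $j$ is an open immersion, $\overline{U}$-equivariance having been used to descend it from the level of cotangent bundles; (2) reduce surjectivity of $j$ to the claim that $\mu_{\overline{U}}^{-1}(d\psi_e)\subseteq T^*(G/U_P)$ is set-theoretically contained in $T^*(\overline{U}^P P/U_P)$, i.e.\ lies over the big cell; (3) prove this containment using the Bruhat stratification of $G/U_P$ by left $\overline{U}$-orbits together with the observation that a covector satisfying the Whittaker condition has the property that $d\psi_e$ restricted to the Lie algebra of the $\overline{U}$-stabilizer of its base point must be the image of the moment map, which cannot vanish on a simple-root line, whereas for base points outside the big cell such a line sits inside the stabilizer; (4) conclude that both source and target of $j$, being reduced and the map being an open immersion that is surjective, are isomorphic. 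I would expect step (3) — pinning down exactly which simple-root subgroup lies in the stabilizer for each non-trivial Bruhat cell, and checking that $\psi$ (which pairs nontrivially with \emph{every} simple root of $\overline{U}$, by construction of $\psi$ as the sum of the simple-root coordinates) is therefore obstructed — to be the main technical obstacle, though it is essentially a direct computation with the root datum; this is morally the parabolic analogue of \cite[Lemma 3.2.3]{GinzburgKazhdanDifferentialOperatorsOnBasicAffineSpaceandtheGelfandGraevAction} and one expects the argument there to adapt. An alternative, possibly cleaner, route to step (3) is to note that $G/_\psi \overline{U}$ makes sense as the Whittaker reduction and that by \cref{Scheme with Almost Hamiltonian G Action Has Easy Whittaker Reduction After Choosing Kostant Section}-type reasoning (applied to $T^*(G/U_P)$ with its Hamiltonian $\overline{U}$-action through $\mu_{\overline{U}}$, which factors through the restriction of the $G$-moment map) the reduction is computed by base change along the Kostant section, and the Kostant section lands in the regular semisimple-adjacent locus where everything is supported over the big cell; I would try this first and fall back on the explicit Bruhat-cell computation if the factorization hypotheses are not cleanly met.
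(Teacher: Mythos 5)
Your argument is essentially correct, but it is a genuinely different proof from the one in the paper, so let me both close your step (3) and compare. Your route: since $j$ is already an open embedding, it suffices to show that the fibre of $\mu_{\overline{U}}^{-1}(d\psi_e)\subseteq T^*(G/U_P)$ over any point of $G/U_P$ outside the big cell $\mathscr{B}$ is empty. Writing such a point as $\bar{u}\dot{w}pU_P$ with $w\notin W_L$ (the Weyl group of $L$) and $\dot w$ a minimal-length representative of $wW_L$, the kernel of the infinitesimal action $\overline{\mathfrak{u}}\to T_{\bar u\dot w pU_P}(G/U_P)$ is $\mathrm{Ad}_{\bar u}\bigl(\overline{\mathfrak{u}}\cap \mathrm{Ad}_{\dot w}(\mathfrak{u}_P)\bigr)$, and a covector satisfying the moment-map equation forces $d\psi_e$ to vanish on this kernel; since $\psi$ is a character, $d\psi_e$ is $\mathrm{Ad}_{\overline U}$-invariant, so one only needs a root $\beta$ of $\mathfrak{u}_P$ with $w(\beta)$ a negative simple root. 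This is the point you left as ``a direct computation with the root datum,'' and it is where the minimal-length choice earns its keep: as $w\notin W_L$ there is a simple $\alpha$ with $w^{-1}(\alpha)<0$, and minimality (i.e.\ $w(\Phi_L^+)\subseteq\Phi^+$) rules out $w^{-1}(\alpha)$ lying in $\Phi_L$, so $\beta:=-w^{-1}(\alpha)$ is a root of $\mathfrak{u}_P$ with $\mathfrak{g}_{-\alpha}=\mathfrak{g}_{w(\beta)}$ in the kernel and $d\psi_e|_{\mathfrak{g}_{-\alpha}}\neq 0$, a contradiction; without some such care the naive statement ``some simple root is inverted'' is not enough, because the inverted root could a priori come from the Levi. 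Granting this, every $\overline U$-orbit in $\mu_{\overline U}^{-1}(d\psi_e)$ lies over the $\overline U$-stable open set $\mathscr B$, so $j$ is a surjective open immersion, hence an isomorphism (no reducedness needed). The paper's proof is different in kind: it avoids Bruhat/root combinatorics and pointwise reasoning entirely, instead showing that both $tj$ and $t=q_L\mu_L^{\circ}$ are $L$-torsors over $\mathfrak{c}_L$ — the former by the Kostant-section trivialization of \cref{Scheme with Almost Hamiltonian G Action Has Easy Whittaker Reduction After Choosing Kostant Section}, the latter via the Cartesian squares \labelcref{Big Diagram} involving the parabolic Grothendieck--Springer resolution $\tilde{\mathfrak{g}}_P^{\mathrm{reg}}$ — and then invoking that any map of torsors is an isomorphism. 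Your approach is more elementary and makes transparent the geometric mechanism (the Whittaker condition kills everything off the big cell, the parabolic analogue of Kostant's classical observation), while the paper's buys extra structure for free, namely that $T^*(\overline U{}_\psi\backslash G/U_P)$ is an $L$-torsor over $\mathfrak{c}_L$, which dovetails with its identification with $T^*(\overline{U}^L_{\psi_L}\backslash L)$ used immediately afterwards. I would discard your sketched ``alternative, possibly cleaner route'' at the end (Kostant section landing in a ``regular semisimple-adjacent locus''): as stated it does not give the support claim, and the Bruhat argument above is the one to keep.
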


We prove this after setting some notation which will also be used later.  Since multiplication induces a $\overline{U}^P$-equivariant isomorphism \[\overline{U}^P \times P/U_P \xrightarrow{\sim} \overline{U}^PP/U_P\]
we have an isomorphism \[T^*(\overline{U}^P_{\psi_P}\backslash\bigCell) \cong T^*(\overline{U}^P_{\psi_P}\backslash \overline{U}^P \times P/U_P) \xrightarrow{\sim} T^*(P/U_P)\] of Hamiltonian $U_L$-varieties. It is standard that $T^*(\overline{U}{}_{\psi}\backslash\bigCell)$ identifies with the Kostant-Whittaker reduction of the left Hamiltonian $L$-space $T^*(\overline{U}^P_{\psi_P}\backslash\bigCell)$ by $\psi_L$, and so we obtain an isomorphism \[h: T^*(\overline{U}{}_{\psi}\backslash\bigCell) \xrightarrow{\sim} T^*(\overline{U}^L_{\psi_L}\backslash L)\] of right Hamiltonian $L$-varieties. 
\begin{proof}[Proof of \cref{Open Embedding of Big Cell Induces Isomorphism on Whittaker Cotangent Bundles}:]Let $\mu_G$, respectively $\mu_L$, denote the moment map for the left Hamiltonian $G$-action, respectively right Hamiltonian $L$-action, on $T^*(G/U_P)$, and let $\mu_L^{\circ}$ denote the moment map for the right Hamiltonian $L$-action on $T^*(\overline{U}{}_{\psi}\backslash  G/U_P)$. Since $\overline{U}^PP/U_P \subseteq G/U_P$ is closed under the right $L$-action, the moment map for the right Hamiltonian $L$-action on $T^*(\overline{U}{}_{\psi}\backslash\overline{U}^PP/U_P)$ is $\mu_L^{\circ} \circ j$. Let \[t: T^*(\overline{U}{}_{\psi}\backslash G/U_P) \to \mathfrak{c}_L := \Spec(\Sym(\mathfrak{l})^L)\] denote the composite of $\mu_L^{\circ}$ with the quotient map $q_L: \mathfrak{l}^* \to \mathfrak{c}_L$. Since any map of torsors is an isomorphism, it suffices to show that $t$ and $tj$ are $L$-torsors.

    Using the map $h$ and \cref{Scheme with Almost Hamiltonian G Action Has Easy Whittaker Reduction After Choosing Kostant Section}, we see that the map $tj$ is a trivial $L$-torsor. Thus it remains to show that $t$ is an $L$-torsor. To this end, first observe that, if we let $\beta$ denote the restriction of $\mu_G$ to the Kostant section, there is a Cartesian square\begin{equation}\label{Observed Cartesian Diagram}\xymatrix{
    T^\ast(\overline{U}{}_\psi \backslash G/U_P) \ar[r]^{\subseteq} \ar[d]^{\beta} & T^\ast(G/U_P) \ar[d]^{\mu_G} \\
   \fr{c} \ar[r]^{\kappa} & \g^\ast.
    }\end{equation}
    by \cref{Scheme with Almost Hamiltonian G Action Has Easy Whittaker Reduction After Choosing Kostant Section}. Since the image of $\kappa$ factors through the regular locus $\LGd_{\mathrm{reg}}$ of $\LGd$, we obtain a commutative diagram 
    \begin{equation}\label{Big Diagram}\xymatrix{
    T^\ast(\overline{U}{}_\psi \backslash G/U_P) \ar[r]^{\subseteq} \ar[d]^{\tilde{\beta}}  & T^\ast(G/U_P) \times_{\LGd} \LGd_{\mathrm{reg}} \ar[d]^{\tilde{\mu}_{reg}} \\
    \mathfrak{c} \times_{\LGd_{\mathrm{reg}}} \tilde{\g}_P^{\mathrm{reg}}  \ar[r]^{\kappa'} \ar[d] & \tilde{\g}_P^{\mathrm{reg}} \ar[d] \ar[r]^{\overline{\mu}_L|_{\tilde{\g}_P^{\mathrm{reg}} }} & \mathfrak{c}_L \ar[d] \\
    \fr{c} \ar[r]^{\kappa} & \g^\ast_{\mathrm{reg}} \ar[r]^{q_G|_{\LGd_{\mathrm{reg}}}} & \fr{c},
    }\end{equation} 
    where $\tilde{\g}_P$ is the (dual) parabolic Grothendieck-Springer resolution $G \times^{P} (\LG/\mathfrak{u}_P)^*$, $\tilde{\g}_P^{\mathrm{reg}}$ is its restriction to the regular locus of $\LGd$, $\tilde{\mu}$ is the composite \begin{equation}\label{Definition of TildeMu}T^*(G/U_P) \cong G \times^{U_P} (\LG/\mathfrak{u}_P)^* \to G \times^{P} (\LG/\mathfrak{u}_P)^* = \tilde{\g}_P,\end{equation} $\tilde{\mu}_{\mathrm{reg}}$ is its restriction to the regular elements of $\LGd$, $\overline{\mu}_L$ is the map induced by fact that $\mu_L$ is equivariant for the right $L$-action,  $\kappa'$ is projection, and $\tilde{\beta}$ is the map induced by $\beta$ and $\tilde{\mu}_{\mathrm{reg}}$. 
    
    We claim that every square in \labelcref{Big Diagram} is Cartesian: indeed, the fact that the bottom left square is Cartesian follows by definition, and this, along with the fact \labelcref{Observed Cartesian Diagram} is Cartesian, implies the top square is Cartesian. Finally, the fact the rightmost square is Cartesian is known, see for example \cite{KostantLieGroupRepsonPolynomialRings}, \cite[Remark 3.5.4]{RicheKostantSectionUniversalCentralizerandModularDerivedSatakeEquivalence}. 
    
    Now, as $\kappa$ is a section of $q_G$, by definition, the composite $q_G \kappa$ is the identity, and so the composite $\overline{\mu}_L\kappa'$ is an isomorphism since the bottom two squares in \labelcref{Big Diagram} are Cartesian. Because of this and the fact that \[t := q_L\mu_L^{\circ} = q_L\mu_L|_{T^*(\overline{U}{}_\psi \backslash G/U_P)} = \overline{\mu}_L\tilde{\mu}_{\mathrm{reg}}|_{T^*(\overline{U}{}_\psi \backslash G/U_P)} = \overline{\mu}_L\kappa'\tilde{\beta},\] to show that $t$ is an $L$-torsor it suffices to show that $\tilde{\beta}$ is an $L$-torsor. However, this follows directly from the fact that $\tilde{\mu}$ is evidently an $L$-torsor by its definition in \labelcref{Definition of TildeMu}, as well as the fact that the top square of the diagram \labelcref{Big Diagram} is Cartesian.\footnote{This proof was developed in discussions with Sanath Devalapurkar.}
\end{proof}
\subsection{Recovering Functions from the Regular Locus}\label{Codimension Subsection}
The goal of this section will be to prove the following proposition, which allows us to recover the functions on $T^*(G/U_P)$ from its restriction \[T^*(G/U_P)_{\text{reg}} := T^*(G/U_P) \times_{\LGd} \LGd_{\text{reg}}\] to the regular locus, and whose proof will occupy the entirety of Section \labelcref{Codimension Subsection}.

\begin{Proposition}\label{Global Functions on Cotangent Bundle is Global Functions on Cotangent Bundle of G Mod UP}
    The restriction map induces an isomorphism $\mathcal{O}(T^*(G/U_P)) \xrightarrow{\sim} \mathcal{O}(T^*(G/U_P)_{\text{reg}})$.
\end{Proposition}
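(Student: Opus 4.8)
The plan is to combine an algebraic Hartogs extension principle with a codimension estimate for the complement of the regular locus, transported first through the parabolic Grothendieck--Springer resolution and then through the full one. First, note that $T^*(G/U_P) \cong G \times^{U_P} (\LG/\mathfrak{u}_P)^*$ is the total space of a vector bundle over the smooth variety $G/U_P$, hence is a smooth connected variety, in particular normal. For any normal Noetherian scheme $X$ and any open $U \subseteq X$ with $\codim_X(X \setminus U) \geq 2$, the restriction $\mathcal{O}(X) \to \mathcal{O}(U)$ is an isomorphism, since $\mathcal{O}_X$ satisfies Serre's condition $S_2$. Applying this with $U = T^*(G/U_P)_{\mathrm{reg}} = \mu_G^{-1}(\LGd_{\mathrm{reg}})$, where $\mu_G\colon T^*(G/U_P) \to \LGd$ is the moment map for the left $G$-action, reduces the proposition to showing
\[
\codim_{T^*(G/U_P)}\bigl(T^*(G/U_P) \setminus T^*(G/U_P)_{\mathrm{reg}}\bigr) \geq 2 .
\]
Note $\mu_G$ itself is not flat once $P \neq G$, so one cannot conclude this directly from the codimension of the non-regular locus of $\LGd$.

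Next, recall from \labelcref{Definition of TildeMu} the map $\tilde{\mu}\colon T^*(G/U_P) \to \tilde{\g}_P$, which is an $L$-torsor --- in particular faithfully flat with smooth connected fibers --- and which satisfies $\mu_G = \pi_P \circ \tilde{\mu}$, where $\pi_P\colon \tilde{\g}_P \to \LGd$ is the structure map of the parabolic Grothendieck--Springer resolution, $[g,\xi] \mapsto \mathrm{Ad}^*(g)\xi$. Since codimension of a closed subset is unchanged under pullback along such a map and $T^*(G/U_P)_{\mathrm{reg}} = \tilde{\mu}^{-1}\bigl(\pi_P^{-1}(\LGd_{\mathrm{reg}})\bigr)$, it suffices to prove $\codim_{\tilde{\g}_P} \pi_P^{-1}(\LGd \setminus \LGd_{\mathrm{reg}}) \geq 2$. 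Here $\tilde{\g}_P = G \times^P (\LG/\mathfrak{u}_P)^*$ is irreducible of dimension $\dim \LG$, and $\pi_P$ is finite over $\LGd_{\mathrm{reg}}$ by the Cartesian square in \labelcref{Big Diagram}, which identifies $\tilde{\g}_P^{\mathrm{reg}}$ with $\LGd_{\mathrm{reg}} \times_{\mathfrak{c}} \mathfrak{c}_L$ and $\mathfrak{c}_L \to \mathfrak{c}$ is finite.

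Finally, since $B \subseteq P$ there is a surjection $q\colon \tilde{\g}_B \to \tilde{\g}_P$ from the full Grothendieck--Springer resolution with $\pi_B = \pi_P \circ q$, so $\pi_P^{-1}(Z) = q\bigl(\pi_B^{-1}(Z)\bigr)$ and $\dim \pi_P^{-1}(Z) \leq \dim \pi_B^{-1}(Z)$ for every closed $Z \subseteq \LGd$; taking $Z = \LGd \setminus \LGd_{\mathrm{reg}}$ it is enough to bound $\dim \pi_B^{-1}(\LGd \setminus \LGd_{\mathrm{reg}})$. Now the non-regular locus of $\LGd$ has codimension $3$ (classical, using the $G$-equivariant identification $\LG \cong \LGd$ and the hypotheses on $p$), and $\pi_B$ is \emph{small}: it is finite over $\LGd_{\mathrm{reg}}$, and for each $k \geq 1$ the locus $\{x : \dim \pi_B^{-1}(x) \geq k\}$ has codimension $> 2k$ in $\LGd$. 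Hence $\dim \pi_B^{-1}(\LGd \setminus \LGd_{\mathrm{reg}}) \leq \dim \LG - 2 = \dim \tilde{\g}_P - 2$, which is the required estimate. One could alternatively quote the codimension-$\geq 2$ statement for $\tilde{\g}_P \setminus \tilde{\g}_P^{\mathrm{reg}}$ directly from the literature; cf.\ \cite{GinzburgKazhdanDifferentialOperatorsOnBasicAffineSpaceandtheGelfandGraevAction} for the case $P = B$ and \cite{RicheKostantSectionUniversalCentralizerandModularDerivedSatakeEquivalence} for the modular analogues.

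The main obstacle is this last codimension estimate in characteristic $p > 0$: one needs both that the non-regular locus of $\LGd$ has codimension $3$ and that the Grothendieck--Springer map remains small modulo $p$, which is exactly where condition (C4) on $p$ enters; if desired this can be reduced to the characteristic-$0$ statement by a standard spreading-out and upper-semicontinuity argument. Everything else --- smoothness of $T^*(G/U_P)$, the $S_2$ extension, and the two reductions along the faithfully flat $\tilde{\mu}$ and the surjection $q$ --- is formal.
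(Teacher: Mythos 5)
Your argument is correct in outline but takes a genuinely different route from the paper. Both proofs start the same way: $T^*(G/U_P)$ is smooth, hence normal, so the statement reduces via Hartogs/$S_2$ to showing the complement of the regular locus has codimension $\geq 2$. The paper then transfers the codimension question not along $\tilde{\mu}$ but along the $U_P$-torsor $G \times (\LG/\mathfrak{u}_P)^* \to T^*(G/U_P)$, reducing to the statement that the non-regular elements of $\LGd$ meet the linear subspace $(\LG/\mathfrak{u}_P)^*$ in codimension $\geq 2$ (\cref{Complement of Regular Locus of Parabolic Has Codimension Two}); this is proved by an elementary hypersurface argument: a codimension-one component would be a homogeneous hypersurface $V(f)$, and restricting to $(\LG/\mathfrak{u})^* \subseteq (\LG/\mathfrak{u}_P)^*$ and invoking the description of the non-regular-semisimple locus there from \cite[Proposition 1.9.3]{BezrukavnikovRicheAffineBraidGroupActionsonDerivedCategoriesofSpringerResolutions} forces $V(f)$ to contain regular elements, a contradiction. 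The point of that detour is that all the modular input is packaged into one citation valid under (C4), whereas your route through $\tilde{\g}_P$ and $\tilde{\g}_B$ leans on two heavier classical facts --- codimension $3$ of the irregular locus of $\LGd$ (Veldkamp/Kostant in good characteristic) and smallness of the Grothendieck--Springer map over $k$ --- which are true under the paper's hypotheses but which you would need to source carefully in the modular setting; your geometric reductions themselves (codimension preserved along the $L$-torsor $\tilde{\mu}$, surjectivity of $\tilde{\g}_B \to \tilde{\g}_P$ via the Levi's Grothendieck--Springer map, and the fiber-dimension count) are fine.

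Two caveats. First, the proposed fallback ``reduce to characteristic $0$ by spreading out and upper-semicontinuity'' does not work as stated: upper-semicontinuity of fiber dimension allows fibers and degeneracy loci to \emph{grow} in special characteristic, so characteristic-$0$ bounds do not transfer in the direction you need; if you want to avoid Veldkamp-type results you should instead argue as the paper does. Second, the suggestion to ``quote the codimension-$\geq 2$ statement for $\tilde{\g}_P \setminus \tilde{\g}_P^{\mathrm{reg}}$ directly'' from \cite{RicheKostantSectionUniversalCentralizerandModularDerivedSatakeEquivalence} is not available: that reference treats the Borel case, not the parabolic one, which is precisely the gap this paper's lemma is filling.
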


The cotangent bundle $T^*(G/U_P)$ is well known to be identified with the variety $(G \times (\mathfrak{g}/\mathfrak{u}_P)^*)/U_P$ and, under this identification, the open subscheme $T^*(G/U_P)_{\text{reg}}$ corresponds to the variety $(G \times (\mathfrak{g}/\mathfrak{u}_P)_{\text{reg}}^*)/U_P$. Thus since $T^*(G/U_P)$ is a smooth (and, in particular, normal) variety, \cref{Global Functions on Cotangent Bundle is Global Functions on Cotangent Bundle of G Mod UP} follows immediately from the following lemma:

\begin{Lemma}\label{Complement of Regular Locus of Parabolic Has Codimension Two}
    The codimension of the complement of $(\mathfrak{g}/\mathfrak{u}_P)^*_{\text{reg}}$ in $(\mathfrak{g}/\mathfrak{u}_P)^*$ is at least two. 
\end{Lemma}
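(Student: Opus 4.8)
**Plan for the proof of Lemma (codimension of complement of $(\mathfrak{g}/\mathfrak{u}_P)^*_{\text{reg}}$).**

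The plan is to reduce this to the classical fact that the non-regular locus of $\mathfrak{g}^*$ has codimension at least $3$ (Kostant), combined with the fibration structure of $(\mathfrak{g}/\mathfrak{u}_P)^*$ over $\mathfrak{g}^*$-type objects. First I would fix notation: identify $(\mathfrak{g}/\mathfrak{u}_P)^* \subseteq \mathfrak{g}^*$ with $\mathfrak{u}_P^\perp = \mathfrak{p}^{-,*}$-style annihilator, i.e. the subspace of functionals vanishing on $\mathfrak{u}_P$, which via the fixed $G$-equivariant isomorphism $\mathfrak{g}\xrightarrow{\sim}\mathfrak{g}^*$ I may also think of inside $\mathfrak{g}$; call this $V := \mathfrak{u}_P^\perp$, a $P$-stable subspace, and recall $\dim V = \dim \mathfrak{g} - \dim\mathfrak{u}_P = \dim\mathfrak{p}$. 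Here ``regular'' means regular \emph{as an element of $\mathfrak{g}^*$} (the centralizer has minimal dimension $\mathrm{rk}(G)$), so $V_{\text{reg}} = V \cap \mathfrak{g}^*_{\text{reg}}$ and the complement is $V \setminus V_{\text{reg}} = V \cap (\mathfrak{g}^* \setminus \mathfrak{g}^*_{\text{reg}})$.

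The key step is to bound $\dim\big(V \cap (\mathfrak{g}^*\setminus\mathfrak{g}^*_{\text{reg}})\big)$. The natural approach is via the Grothendieck--Springer-type resolution already in play: the composite $\tilde{\mathfrak{g}}_P = G\times^P V \to \mathfrak{g}^*$ has image all of $\mathfrak{g}^*$ and its restriction over $\mathfrak{g}^*_{\text{reg}}$ is (as used in the previous proof) nicely behaved, while over the non-regular locus one controls fibre dimensions. Concretely, I would argue: $G\cdot V = \mathfrak{g}^*$ (every coadjoint orbit meets $\mathfrak{p}^\perp$, e.g. because every element of $\mathfrak{g}$ is $G$-conjugate into a Borel, hence into $\mathfrak{p}$, and $\mathfrak{b}\subseteq V$ is false in general but $\mathfrak{n}\subseteq \mathfrak{u}_P^\perp$... ) — more carefully, use that the map $G\times^P V\to\mathfrak g^*$ is surjective and, restricted to any $G$-stable closed subset $Z\subseteq\mathfrak g^*$, surjects onto $Z$ with fibres of dimension $\le \dim(G/P)$. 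Applying this with $Z = \mathfrak{g}^*\setminus\mathfrak{g}^*_{\text{reg}}$ (which is $G$-stable, closed, of codimension $\ge 3$ in $\mathfrak{g}^*$, so dimension $\le \dim\mathfrak g - 3$) gives
\[
\dim\big(G\times^P(V\setminus V_{\text{reg}})\big) = \dim\big(G\times^P (V\cap Z)\big) \le \dim Z + \dim(G/P) \le \dim\mathfrak g - 3 + \dim(G/P),
\]
while the source also fibres over $V\setminus V_{\text{reg}}$ with fibre $G/P$, so $\dim(V\setminus V_{\text{reg}}) \le \dim Z \le \dim\mathfrak g - 3 - ?$... and then compare with $\dim V = \dim\mathfrak p = \dim\mathfrak g - \dim(G/N)\cdot(\text{wrong})$. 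I would instead directly compute: $G\times^P(V\setminus V_{\text{reg}})$ fibres over $V\setminus V_{\text{reg}}$ with fibres $G/P$ (dimension $\dim\mathfrak u_P^{-}$, writing $\mathfrak u_P^-$ for the opposite) \emph{and} maps to $Z$ with fibres of dimension $\le \dim(G/P)$, yielding $\dim(V\setminus V_{\text{reg}}) + \dim(G/P) \le \dim Z + \dim(G/P)$, i.e. $\dim(V\setminus V_{\text{reg}}) \le \dim Z \le \dim\mathfrak g - 3$; since $\dim V = \dim\mathfrak p = \dim\mathfrak g - \dim(G/P)$ and $\dim(G/P) \le \dim(G/B) = \#\Phi^+ $, this gives codimension $\ge 3 - \dim(G/P)$, which is \emph{not} good enough in general. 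So the crude estimate fails, and I must be more precise.

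The \textbf{main obstacle} is therefore exactly this: a naive dimension count of the non-regular locus pulled up to the Grothendieck--Springer resolution overestimates, because generic fibres of $\tilde{\mathfrak g}_P\to\mathfrak g^*$ over the \emph{non-regular} locus jump in dimension by much less than $\dim(G/P)$. The resolution is to stratify the nilpotent-type directions: decompose $\mathfrak g^*\setminus\mathfrak g^*_{\text{reg}}$ by which conjugacy class of Levi the Jordan-type decomposition lands in, and on each stratum use that the fibre of $G\times^P V\to\mathfrak g^*$ over a point with semisimple part of type $\mathfrak l'$ is governed by the corresponding parabolic flag variety for the \emph{centralizer} $G_{s}$, so that fibre dimension plus stratum dimension stays $\le \dim V - 2$. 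Equivalently, and more cleanly, I would instead prove the statement by induction on semisimple rank / reduction to the nilpotent cone: it suffices to show $\mathcal N\cap V$ has codimension $\ge 2$ in $V$ where $\mathcal N$ is the nilpotent cone (using that a point of $V$ is $\mathfrak g^*$-regular iff its nilpotent part, inside the centralizer of its semisimple part, is regular, together with a slice argument), and then $\mathcal N\cap V = \mathcal N\cap\mathfrak u_P^\perp$ is the nilradical-type variety $G\times^P(\mathcal N\cap V)\to\mathcal N$ whose dimension one computes from the Springer fibre dimension $\dim(G/B) - \frac12\dim\mathcal O$ — a standard computation showing $\dim(\mathcal N\cap V) = \dim\mathcal N - \dim(G/P) = (\dim\mathfrak g - \mathrm{rk}\,G) - \dim(G/P) = \dim V - \mathrm{rk}\,G \le \dim V - 2$, using $\mathrm{rk}\,G\ge 2$ (the rank-one case $G = \mathrm{SL}_2$ or $\mathrm{PGL}_2$ being checked by hand, where $P = B$, $V = \mathfrak b$, and $\mathcal N\cap\mathfrak b$ is a line in the plane $\mathfrak b$, codimension $1$ — so in fact the rank-one, $P=B$ case is a genuine exception that must be excluded or handled, presumably via $G=\mathrm{GL}_n$ having $\mathrm{rk}\ge 2$ for $n\ge 2$ and $\mathrm{SL}_2/\mathrm{PGL}_2$ requiring the separate normality argument the paper may already sidestep). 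The cleanest writeup I would aim for: cite Kostant for $\mathrm{codim}_{\mathfrak g^*}(\mathfrak g^*\setminus\mathfrak g^*_{\text{reg}})\ge 3$, then show the pullback estimate is saved by noting $V\setminus V_{\text{reg}}\hookrightarrow \mathfrak g^*\setminus\mathfrak g^*_{\text{reg}}$ is a \emph{linear section} cutting down dimension by at least $\dim(G/P)-1$ generically (Bertini-type), giving $\dim(V\setminus V_{\text{reg}}) \le (\dim\mathfrak g - 3) - (\dim(G/P) - 1) = \dim V - 2$.
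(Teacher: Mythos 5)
Your proposal does not close the argument; each of the three routes you sketch has a genuine gap. The first dimension count you already concede fails. The second route, ``reduction to the nilpotent cone,'' rests on a false reduction: the non-regular locus of $(\mathfrak{g}/\mathfrak{u}_P)^*$ is not controlled by $\mathcal{N}\cap V$ at all, because $\mathcal{N}\cap V$ contains \emph{regular} nilpotent elements (already for $P=B$, where $V\cong\mathfrak{b}$ and $\mathcal{N}\cap\mathfrak{b}=\mathfrak{n}$ contains regular nilpotents), so its codimension --- which is $\mathrm{rk}\,G$ and can be $1$ --- says nothing about the codimension of the non-regular locus. Your purported ``rank-one exception'' is a symptom of this confusion: for $G=\mathrm{SL}_2$, $P=B$, the non-regular elements of $\mathfrak{b}$ are just the origin, of codimension $2$, so the lemma holds there with no exception. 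The stratification-by-semisimple-part variant is only gestured at; the inequality ``fibre dimension plus stratum dimension stays $\le \dim V-2$'' is precisely the content that would need to be proved, and it is not supplied. Finally, the ``cleanest writeup'' via a Bertini-type linear section is invalid: $V=\mathfrak{u}_P^{\perp}$ is a fixed, highly non-generic linear subspace, and intersecting a closed subvariety with a special linear subspace need not drop its dimension at all (a priori the subspace could contain a divisor of non-regular elements) --- ruling this out is exactly what the lemma asserts, so no generic-section theorem applies.

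For comparison, the paper argues by contradiction with a hypothetical codimension-one component, necessarily of the form $V(f)$ for a homogeneous $f$ since $\mathcal{O}((\mathfrak{g}/\mathfrak{u}_P)^*)$ is a polynomial ring. Since non-regular elements are in particular not regular semisimple, one may restrict $f$ to the smaller subspace $(\mathfrak{g}/\mathfrak{u})^*$, where (by the proof of \cite[Proposition 1.9.3]{BezrukavnikovRicheAffineBraidGroupActionsonDerivedCategoriesofSpringerResolutions}) the complement of the regular semisimple locus is an explicit union of hypersurfaces $V(h_{\alpha})$, each of which meets the regular locus in good characteristic. Divisibility then forces $V(f)$ to contain regular elements, a contradiction. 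If you want to salvage your approach, you would need an honest stratum-by-stratum estimate of $\dim\bigl(V\cap(\mathfrak{g}^*\setminus\mathfrak{g}^*_{\mathrm{reg}})\bigr)$ using centralizers of semisimple parts, which is substantially more work than the divisibility argument above.
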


\begin{proof}
Let $\LG^{*, c}_{\mathrm{reg}}$ denote the complement of the regular elements in $\LGd$, so that the set \[(\mathfrak{g}/\mathfrak{u}_P)_{\mathrm{reg}}^{*, c} := \LG^{*, c}_{\mathrm{reg}} \cap (\mathfrak{g}/\mathfrak{u}_P)^*\] is the complement of $(\mathfrak{g}/\mathfrak{u}_P)^*_{\text{reg}}$ in $(\mathfrak{g}/\mathfrak{u}_P)^*$. Assume that $V$ is a nonempty irreducible component of $(\mathfrak{g}/\mathfrak{u}_P)^{*, c}_{\mathrm{reg}}$ that has codimension one in $(\mathfrak{g}/\mathfrak{u}_P)^*$. Since $\mathcal{O}((\LG/\mathfrak{u}_P)^*)$ is a polynomial algebra, we may write $V$ as the vanishing set $V(f)$ of some function $f \in \mathcal{O}((\LG/\mathfrak{u}_P)^*)$ and, since $V$ is nonempty and invariant under scalar multiplication, we see that $f$ is a homogeneous polynomial of some positive degree. 

Let $\LG^{*}_{\mathrm{rss}}$ denote the set of regular semisimple elements in $\LGd$, and let $\LG^{*, c}_{\mathrm{rss}}$ denote its complement. Since $\LGd_{\mathrm{rss}} \subseteq \LGd_{\mathrm{reg}}$, $V$ is an irreducible closed subscheme of \[(\mathfrak{g}/\mathfrak{u}_P)_{\mathrm{rss}}^{*, c} := \LG^{*, c}_{\mathrm{rss}} \cap (\LG/\mathfrak{u}_P)^*\] or, in other words, of the complement of the set of regular semisimple elements in $(\mathfrak{g}/\mathfrak{u}_P)^*$. Letting $\mathfrak{u}$ denote the Lie algebra of the unipotent radical of $B$, we may restrict to the closed subscheme $(\LG/\mathfrak{u})^* \subseteq (\LG/\mathfrak{u}_P)^*$ and obtain that \begin{equation}\label{Easier Containment}V(f|_{(\LG/\mathfrak{u})^*}) = V(f) \cap (\LG/\mathfrak{u})^* \subseteq \LG^{*, c}_{\mathrm{rss}} \cap (\LG/\mathfrak{u})^*.\end{equation} Furthermore, the proof of \cite[Proposition 1.9.3]{BezrukavnikovRicheAffineBraidGroupActionsonDerivedCategoriesofSpringerResolutions} 
shows that \begin{equation}\label{Harder Containment} \LG^{*, c}_{\mathrm{rss}} \cap (\LG/\mathfrak{u})^* = \cup_{\alpha}V(h_{\alpha})\end{equation} for certain homogeneous polynomials $h_{\alpha} \in \mathcal{O}((\LG/\mathfrak{u})^*)$ of positive degree for which $V(h_{\alpha}) \cap (\LG/\mathfrak{u})^*_{\text{reg}}$ is nonempty for our choice of $p$, and more generally any good prime $p$. 
Combining \labelcref{Easier Containment} and \labelcref{Harder Containment}, we see that $V(f|_{(\LG/\mathfrak{u})^*}) \subseteq \cup_{\alpha}V(h_{\alpha})$, and so $f|_{(\LG/\mathfrak{u})^*}$ is a product of some subset of the $h_{\alpha}$; moreover, this subset of $h_{\alpha}$ is nonempty since $0 \in V(f) \cap (\LG/\mathfrak{u})^*$. Choosing a divisor $h_{\alpha}$ of $f|_{(\LG/\mathfrak{u})^*}$, we therefore obtain that \[\emptyset \neq V(h_{\alpha}) \cap (\LG/\mathfrak{u})^*_{\text{reg}} \subseteq V(h_{\alpha}) \subseteq V(f|_{(\LG/\mathfrak{u})^*}) \subseteq V(f) = V.\]  In particular, $V \cap \LGd_{\mathrm{reg}}$ is nonempty, which contradicts our assumption that $V$ is a subset of $(\LG_{\mathrm{reg}}/\mathfrak{u}_P)^{*, c}_{\mathrm{reg}} \subseteq \LG_{\mathrm{reg}}^{*, c}$. 
\end{proof}

\subsection{Proof of the Main Theorem}
We have equivalences of ring objects in $\QCoh(\LGd_{\text{reg}})^G$ 
\[\mathcal{O}_{T^*(G/U_P)_{\text{reg}}} \xrightarrow{\sim} (\mathcal{O}_{G \times \mathfrak{c}} \otimes_{\mathcal{O}(\mathfrak{c})} \kappa^*(\mathcal{O}_{T^*(G/U_P)_{\text{reg}}}))^{J_G} \xrightarrow{\sim} (\mathcal{O}_{G \times \mathfrak{c}} \otimes_{\mathcal{O}(\mathfrak{c})} \mathcal{O}(T^*(\overline{U}{}_{\psi}\backslash G/U_P))^{J_G}\] \[\xrightarrow{\text{id} \otimes j^*} (\mathcal{O}_{G \times \mathfrak{c}} \otimes_{\mathcal{O}(\mathfrak{c})} \mathcal{O}(T^*(\overline{U}{}_{\psi}\backslash\bigCell))^{J_G} \xrightarrow{\text{id} \otimes h^*} (\mathcal{O}_{G \times \mathfrak{c}} \otimes_{\mathcal{O}(\mathfrak{c})} \mathcal{O}(T^*(\overline{U}^L_{\psi_L}\backslash L))^{J_G} \] where the first isomorphism is given by \cref{Any algebra object on regular locus is recovered from its Kostant-Whittaker Reduction}, the second is given by applying \cref{Scheme with Almost Hamiltonian G Action Has Easy Whittaker Reduction After Choosing Kostant Section} with $X = T^*(G/U_P)_{\mathrm{reg}}$, the third map is an isomorphism by \cref{Open Embedding of Big Cell Induces Isomorphism on Whittaker Cotangent Bundles}, and the map $h$ is an isomorphism by our above analysis after the statement of \cref{Open Embedding of Big Cell Induces Isomorphism on Whittaker Cotangent Bundles}. Using the equivalence of \cref{Scheme with Almost Hamiltonian G Action Has Easy Whittaker Reduction After Choosing Kostant Section}, we therefore obtain an isomorphism \[\mathcal{O}_{T^*(G/U_P)_{\text{reg}}} \cong (\mathcal{O}_{T^*(G/_{\psi}\overline{U})} \otimes_{\mathcal{O}(\mathfrak{c})} \mathcal{O}(T^*(\overline{U}^L_{\psi_L}\backslash L)))^{J_G}\] of quasicoherent sheaves of $\Sym(\LG \oplus \mathfrak{l})$-algebras compatible with the actions of $G$ and $L$. Taking global sections, we deduce our desired equivalence from the fact that the open embedding $\LGd_{\text{reg}} \to \LGd$ is qcqs and so $j_*$ preserves limits and \cref{Global Functions on Cotangent Bundle is Global Functions on Cotangent Bundle of G Mod UP}. This proves \cref{Restated Theorem Intro Main for Whittaker} and, as we have explained in Section \labelcref{Whit Subsection}, therefore proves \cref{thm: intro main}.

\subsection{Corollary on Partial Whittaker Cotangent Bundle}
Let $\mathring{U} := U_P\overline{U}^L$, and let $\mathring{\psi}_L: \mathring{U} \to \mathbb{G}_a$ denote the unique character whose kernel contains $U_P$ and which extends $\psi_L$. We now record the following corollary on the functions on the \textit{partial Whittaker cotangent bundle} $T^*(G/_{\mathring{\psi}_L}\mathring{U})$:

\begin{Corollary}\label{Isomorphism Induced by L Whittaker Reduction Corollary}
There is an action of $J_G$ on $J_L$ for which the induced diagonal $J_G$-action induces an isomorphism of $\Sym(\LG)$-algebras
    $$\mathcal{O}(T^*(G/_{\mathring{\psi}_L}\mathring{U})) \cong \mathcal{O}(T^*(G/_{\psi}\overline{U}) \times_{\mathfrak{c}}  J_L)^{J_G}$$ compatible with the actions of $G$ and $J_L$.
\end{Corollary}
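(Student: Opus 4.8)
The plan is to obtain \cref{Isomorphism Induced by L Whittaker Reduction Corollary} by applying one further Whittaker reduction, along the residual right action of $\overline{U}^L\subseteq L$ with character $\psi_L$, to the isomorphism of \cref{Restated Theorem Intro Main for Whittaker}, equivalently to the sheaf-level identity on $\LGd_{\mathrm{reg}}$ produced in its proof.

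First I would record two identifications. Since $\mathring{U}=U_P\rtimes\overline{U}^L$ with $\mathring{\psi}_L$ trivial on $U_P$ and equal to $\psi_L$ on $\overline{U}^L$, reduction in stages gives $T^*(G/_{\mathring{\psi}_L}\mathring{U})\cong T^*(G/U_P)\sslash_{\psi_L}\overline{U}^L$, the right $\psi_L$-Whittaker reduction of the right Hamiltonian $L$-variety $T^*(G/U_P)$; by \cref{Scheme with Almost Hamiltonian G Action Has Easy Whittaker Reduction After Choosing Kostant Section} applied with the group $L$ (taking $X=T^*(G/U_P)$ with its right $L$-moment map $\mu_L$) this is $T^*(G/U_P)\times_{\mathfrak{l}^*,\mu_L;\kappa_L}\mathfrak{c}_L$, so that, $\kappa_L$ being a closed immersion and $T^*(G/U_P)$ affine, $\mathcal{O}(T^*(G/_{\mathring{\psi}_L}\mathring{U}))=\mathcal{O}(T^*(G/U_P))\otimes_{\mathcal{O}(\mathfrak{l}^*)}\mathcal{O}(\mathfrak{c}_L)$. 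Likewise $T^*(\overline{U}^L_{\psi_L}\backslash L)\sslash_{\psi_L}\overline{U}^L=T^*(\overline{U}^L_{\psi_L}\backslash L/_{\psi_L}\overline{U}^L)$ is the two-sided principal Whittaker reduction of $T^*L$; applying \cref{Scheme with Almost Hamiltonian G Action Has Easy Whittaker Reduction After Choosing Kostant Section} with the group $L$ on each side in turn exhibits it as $\{(l,c)\in L\times\mathfrak{c}_L : l\in Z_L(\kappa_L(c))\}$, i.e. as $J_L$; this is precisely the computation identifying the bi-Whittaker reduction $T^*(\overline{U}_\psi\backslash G/_\psi\overline{U})$ with $J_G$, now carried out for $L$, and it transports the descent of the residual right $L$-action together with the $J_G$-action inherited from $\mathcal{O}(T^*(\overline{U}^L_{\psi_L}\backslash L))$.

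Now, since the open immersion $T^*(G/U_P)_{\mathrm{reg}}\hookrightarrow T^*(G/U_P)$ is $L$-equivariant, the isomorphism of \cref{Global Functions on Cotangent Bundle is Global Functions on Cotangent Bundle of G Mod UP} (and, on $\LGd_{\mathrm{reg}}$, the identity $\mathcal{O}_{T^*(G/U_P)_{\mathrm{reg}}}\cong(\mathcal{O}_{G\times\mathfrak{c}}\otimes_{\mathcal{O}(\mathfrak{c})}\mathcal{O}(T^*(\overline{U}^L_{\psi_L}\backslash L)))^{J_G}$ assembled in the proof of \cref{Restated Theorem Intro Main for Whittaker}) is compatible with the right $L$-action and the attendant $\mathcal{O}(\mathfrak{l}^*)$-algebra structure (the right $L$-moment map). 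Applying the operation ``right $\psi_L$-Whittaker reduction by $\overline{U}^L$'', which on such objects computes $\mathcal{R}\mapsto\mathcal{R}\otimes_{\mathcal{O}(\mathfrak{l}^*);\kappa_L^*}\mathcal{O}(\mathfrak{c}_L)$ as in \cref{Scheme with Almost Hamiltonian G Action Has Easy Whittaker Reduction After Choosing Kostant Section} for $L$, turns the left-hand side into $\mathcal{O}(T^*(G/_{\mathring{\psi}_L}\mathring{U}))$ by the first paragraph. On the right-hand side the $J_G$-action, arising from the left $G$-action, commutes with the right $L$-action, so this base change is $J_G$-equivariant; using that $\mathcal{O}_{G\times\mathfrak{c}}$ is flat over $\mathcal{O}(\mathfrak{c})$ and that the Kostant-Whittaker descent equivalence $\kappa^*$ of \cref{Kostant-Whittaker Reduction Is Exact Monoidal Equivalence of Categories with Explicitly Constructed Inverse} is exact, symmetric monoidal, $\mathcal{O}(\mathfrak{l}^*)$-linear and compatible with the right $L$-action, the base change commutes with $(-)^{J_G}$ and with $\mathcal{O}_{G\times\mathfrak{c}}\otimes_{\mathcal{O}(\mathfrak{c})}(-)$, carrying the right-hand side to $(\mathcal{O}_{G\times\mathfrak{c}}\otimes_{\mathcal{O}(\mathfrak{c})}(\mathcal{O}(T^*(\overline{U}^L_{\psi_L}\backslash L))\otimes_{\mathcal{O}(\mathfrak{l}^*)}\mathcal{O}(\mathfrak{c}_L)))^{J_G}=(\mathcal{O}_{G\times\mathfrak{c}}\otimes_{\mathcal{O}(\mathfrak{c})}\mathcal{O}(J_L))^{J_G}$, again by the first paragraph. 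Taking global sections over $\LGd_{\mathrm{reg}}$ exactly as at the end of the proof of the main theorem, and recalling $T^*(G/_\psi\overline{U})\cong G\times\mathfrak{c}$, yields $\mathcal{O}(T^*(G/_{\mathring{\psi}_L}\mathring{U}))\cong\mathcal{O}(T^*(G/_\psi\overline{U})\times_{\mathfrak{c}}J_L)^{J_G}$; the $\Sym(\LG)$-algebra structure (the left $G$-moment map) is preserved since everything took place over $\LGd_{\mathrm{reg}}$, and the $G$-action on $T^*(G/_\psi\overline{U})$ and the $J_L$-action on the $J_L$-factor (the universal centralizer acting on itself) are tracked through the above.

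I expect the main obstacle to be the second half of the third paragraph: checking carefully that the Kostant-Whittaker descent package (the equivalence $\kappa^*$, the invariants functor $(-)^{J_G}$, and the passage to global sections over $\LGd_{\mathrm{reg}}$) genuinely commutes with the non-flat base change $\otimes_{\mathcal{O}(\mathfrak{l}^*)}\mathcal{O}(\mathfrak{c}_L)$ encoding the further $\psi_L$-Whittaker reduction by $\overline{U}^L$. The content is that this reduction is assembled only from $\mathcal{O}(\mathfrak{l}^*)$-linear operations and the right $L$-action, both respected by $\kappa^*$ (the $J_G$-structure being orthogonal to them), which is why it is cleanest to perform the reduction at the level of ring objects in $\QCoh(\LGd_{\mathrm{reg}})^G$ and invoke exactness of $\kappa^*$, rather than manipulating the function rings directly; keeping the $G$- and $J_L$-equivariance visible along the chain of identifications is a subsidiary bookkeeping point.
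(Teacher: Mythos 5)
Your proposal is correct and follows essentially the same route as the paper: apply the $\psi_L$-Whittaker (Kostant--Whittaker) reduction for $L$ to the isomorphism of \cref{Restated Theorem Intro Main for Whittaker}, use exactness of the reduction (via \cref{Kostant-Whittaker Reduction Is Exact Monoidal Equivalence of Categories with Explicitly Constructed Inverse}) to pass it through the $J_G$-invariants, identify the left-hand side by reduction in stages, and identify the bi-Whittaker reduction $T^*(\overline{U}^L_{\psi_L}\backslash L/_{\psi_L}\overline{U}^L)$ with $J_L$ via \cref{Scheme with Almost Hamiltonian G Action Has Easy Whittaker Reduction After Choosing Kostant Section}. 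The only cosmetic differences are that you carry out the reduction at the sheaf level over $\LGd_{\mathrm{reg}}$ and re-take global sections rather than applying it directly to the global isomorphism, and the minor slip that $T^*(G/U_P)$ is quasi-affine rather than affine, which does not affect the argument as written in the paper.
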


\begin{proof}
    Our isomorphism in \cref{Restated Theorem Intro Main for Whittaker} is an isomorphism in particular compatible with the $\Sym(\mathfrak{l})$-algebra structure and the $L$-representation. We may thus apply the Kostant-Whittaker reduction functor for $L$. Since the Kostant-Whittaker reduction functor is exact by \cref{Kostant-Whittaker Reduction Is Exact Monoidal Equivalence of Categories with Explicitly Constructed Inverse}, we obtain an isomorphism \[\mathcal{O}(T^*(G/_{\mathring{\psi}_L}\mathring{U})) \cong \mathcal{O}(T^*(G/_{\psi}\overline{U}) \times_{\mathfrak{c}} T^*(\overline{U}^L_{\psi_L}\backslash L/\overline{U}_{\psi_L}^{L}))^{J_G}\] compatible with the $\Sym(\LG)$-algebra and the actions of $G$ and $J_L$. Using \cref{Scheme with Almost Hamiltonian G Action Has Easy Whittaker Reduction After Choosing Kostant Section} and the fact that the diagonal map $\mathfrak{c}_L \to \mathfrak{c}_L \times_{\mathfrak{c}_L} \mathfrak{c}_L$ is an isomorphism, we may identify $T^*(\overline{U}^L_{\psi_L}\backslash L/\overline{U}_{\psi_L}^{L})$ with the centralizer of some choice of Kostant section for $L$. By, for example, \cite[Section 2.1]{NgoLeLemmeFondamentalPourLesAlgebresdeLie}, \cite[Remark 3.3.10]{RicheKostantSectionUniversalCentralizerandModularDerivedSatakeEquivalence}, this identifies with $J_L$, as desired.
\end{proof}
\begin{Remark}\label{BZSV Connection}
For the ease of exposition, let us assume $k = \mathbb{C}$. Let $G^{\vee}$ denote the Langlands dual group to $G$, and let $L^{\vee}$ and $\mathring{U}^{\vee}$ denote the corresponding subgroups of $G^{\vee}$. \cref{Isomorphism Induced by L Whittaker Reduction Corollary} provides a natural candidate for the dual Hamiltonian $G^{\vee}$-space $M^{\vee}$ for the (not necessarily hyperspherical) Hamiltonian $G$-variety $T^*(G/L)$ in the relative Langlands duality program \cite{BenZviSakleredisVenkateshRelativeLanglandsDuality}--namely, $M^\vee$ is the affine closure of the quasi-affine variety $T^*(G^{\vee}/_{\mathring{\psi}_{L^{\vee}}}\mathring{U}^{\vee})$. See \cite[Conjecture 3.6.15]{Devalapurkar-ku-relative-Langlands} and the surrounding discussion in \textit{loc. cit} for further discussion. 
\end{Remark}

\printbibliography

@misc{Devalapurkar-ku-relative-Langlands,
      title={ku-theoretic spectral decompositions for spheres and projective spaces}, 
      author={Sanath K. Devalapurkar},
      year={2024},
      eprint={2402.03995},
      archivePrefix={arXiv},
      primaryClass={math.AT},
      url={https://arxiv.org/abs/2402.03995}, 
}

@misc{GannonWilliamsDifferentialOperatorsOnBaseAffineSpaceofSLnandQuantizedCoulombBranches,
      title={Differential operators on the base affine space of ${SL}_n$ and quantized {C}oulomb branches}, 
      author={Tom Gannon and Harold Williams},
      year={2023},
      eprint={2312.10278},
      archivePrefix={arXiv},
      primaryClass={math.RT}
}

@misc{DancerGrimmingerMartensZhongComplexSymplectic,
      title={Complex Symplectic Contractions and 3d Mirrors}, 
      author={Andrew Dancer and Julius F. Grimminger and Johan Martens and Zhenghao Zhong},
      year={2024},
      eprint={2406.09626},
      archivePrefix={arXiv},
}

@article {NgoLeLemmeFondamentalPourLesAlgebresdeLie,
    AUTHOR = {{N}g{\^{o}}, Bao Ch{\^{a}}u},
     TITLE = {Le lemme fondamental pour les alg\`ebres de {L}ie},
   JOURNAL = {Publ. Math. Inst. Hautes \'{E}tudes Sci.},
  FJOURNAL = {Publications Math\'{e}matiques. Institut de Hautes \'{E}tudes
              Scientifiques},
    NUMBER = {111},
      YEAR = {2010},
     PAGES = {1--169},
      ISSN = {0073-8301,1618-1913},
   MRCLASS = {22E35 (11S37 14D23 14G35 22E50)},
  MRNUMBER = {2653248},
MRREVIEWER = {R.\ P.\ Langlands},
       DOI = {10.1007/s10240-010-0026-7},
       URL = {https://doi.org/10.1007/s10240-010-0026-7},
}

@article {BezrukavnikovRicheAffineBraidGroupActionsonDerivedCategoriesofSpringerResolutions,
    AUTHOR = {Bezrukavnikov, Roman and Riche, Simon},
     TITLE = {Affine braid group actions on derived categories of {S}pringer
              resolutions},
   JOURNAL = {Ann. Sci. \'{E}c. Norm. Sup\'{e}r. (4)},
  FJOURNAL = {Annales Scientifiques de l'\'{E}cole Normale Sup\'{e}rieure.
              Quatri\`eme S\'{e}rie},
    VOLUME = {45},
      YEAR = {2012},
    NUMBER = {4},
     PAGES = {535--599},
      ISSN = {0012-9593,1873-2151},
   MRCLASS = {20Gxx (14F05 20F36)},
  MRNUMBER = {3059241},
       DOI = {10.24033/asens.2173},
       URL = {https://doi.org/10.24033/asens.2173},
}

@article {KostantLieGroupRepsonPolynomialRings,
    AUTHOR = {Kostant, Bertram},
     TITLE = {Lie group representations on polynomial rings},
   JOURNAL = {Amer. J. Math.},
  FJOURNAL = {American Journal of Mathematics},
    VOLUME = {85},
      YEAR = {1963},
     PAGES = {327--404},
      ISSN = {0002-9327,1080-6377},
   MRCLASS = {22.60 (20.80)},
  MRNUMBER = {158024},
MRREVIEWER = {J.\ L.\ Koszul},
       DOI = {10.2307/2373130},
       URL = {https://doi.org/10.2307/2373130},
}

@misc{MaceratoLEviEquivariantRestrictionofSphericalPerverseSheaves,
      title={Levi-Equivariant Restriction of Spherical Perverse Sheaves}, 
      author={Mark Macerato},
      year={2023},
      eprint={2309.07279},
      archivePrefix={arXiv},
      primaryClass={math.RT}
}

@article {BourgetDancerGrimmingerHananyZhongPartialImplosionsandQuivers,
    AUTHOR = {Bourget, Antoine and Dancer, Andrew and Grimminger, Julius F.
              and Hanany, Amihay and Zhong, Zhenghao},
     TITLE = {Partial implosions and quivers},
   JOURNAL = {J. High Energy Phys.},
  FJOURNAL = {Journal of High Energy Physics},
      YEAR = {2022},
    NUMBER = {7},
     PAGES = {Paper No. 49, 20},
      ISSN = {1126-6708,1029-8479},
   MRCLASS = {81T60 (14L24 81T13)},
  MRNUMBER = {4455989},
       DOI = {10.1007/jhep07(2022)049},
       URL = {https://doi.org/10.1007/jhep07(2022)049},
}

@article {KostantonWhittakerVectorsandRepresentationTheory,
    AUTHOR = {Kostant, Bertram},
     TITLE = {On {W}hittaker vectors and representation theory},
   JOURNAL = {Invent. Math.},
  FJOURNAL = {Inventiones Mathematicae},
    VOLUME = {48},
      YEAR = {1978},
    NUMBER = {2},
     PAGES = {101--184},
      ISSN = {0020-9910,1432-1297},
   MRCLASS = {22E47 (22E45)},
  MRNUMBER = {507800},
MRREVIEWER = {A.\ U.\ Klimyk},
       DOI = {10.1007/BF01390249},
       URL = {https://doi.org/10.1007/BF01390249},
}

@misc{BenZviSakleredisVenkateshRelativeLanglandsDuality,
      title={Relative Langlands Duality}, 
      author={David Ben-Zvi and Yiannis Sakellaridis and Akshay Venkatesh},
      year={2024},
      eprint={2409.04677},
      archivePrefix={arXiv},
      primaryClass={math.RT},
      url={https://arxiv.org/abs/2409.04677}, 
}

@article {GinzburgKazhdanDifferentialOperatorsOnBasicAffineSpaceandtheGelfandGraevAction,
    AUTHOR = {Ginzburg, Victor and Kazhdan, David},
     TITLE = {Differential operators on {$G / U$} and the {G}elfand-{G}raev
              action},
   JOURNAL = {Adv. Math.},
  FJOURNAL = {Advances in Mathematics},
    VOLUME = {403},
      YEAR = {2022},
     PAGES = {Paper No. 108368, 48},
      ISSN = {0001-8708},
   MRCLASS = {22E30 (16S32)},
  MRNUMBER = {4404034},
       DOI = {10.1016/j.aim.2022.108368},
       URL = {https://doi.org/10.1016/j.aim.2022.108368},
}

@book {JantzenRepresentationsofAlgebraicGroups,
    AUTHOR = {Jantzen, Jens Carsten},
     TITLE = {Representations of algebraic groups},
    SERIES = {Mathematical Surveys and Monographs},
    VOLUME = {107},
   EDITION = {Second},
 PUBLISHER = {American Mathematical Society, Providence, RI},
      YEAR = {2003},
     PAGES = {xiv+576},
      ISBN = {0-8218-3527-0},
   MRCLASS = {20G05 (17B10)},
  MRNUMBER = {2015057},
}

@article {RicheKostantSectionUniversalCentralizerandModularDerivedSatakeEquivalence,
    AUTHOR = {Riche, Simon},
     TITLE = {Kostant section, universal centralizer, and a modular derived
              {S}atake equivalence},
   JOURNAL = {Math. Z.},
  FJOURNAL = {Mathematische Zeitschrift},
    VOLUME = {286},
      YEAR = {2017},
    NUMBER = {1-2},
     PAGES = {223--261},
      ISSN = {0025-5874,1432-1823},
   MRCLASS = {17B45 (14G17 14L30 17B08)},
  MRNUMBER = {3648498},
MRREVIEWER = {Paul\ D.\ Levy},
       DOI = {10.1007/s00209-016-1761-3},
       URL = {https://doi.org/10.1007/s00209-016-1761-3},
}
\end{document}